\newcommand{\rrvert}{\vert}
\newcommand{\llvert}{\vert}
\newcommand{\A}{\mathscr{A}}
\newcommand{\B}{\mathscr{B}}
\newcommand{\C}{\mathscr{C}}
\newcommand{\R}{\mathbb{R}}
\newcommand{\Z}{\mathbb{Z}}
\renewcommand{\P}{\mathbb{P}}
\newcommand{\E}{\mathbb{E}}
\newcommand{\shift}{\mathscr{S}}
\newcommand{\ones}{\vec{1}}
\newcommand{\vtheta}{\vec{\theta}}
\newcommand{\vpsi}{\vec{\psi}}
\newcommand{\vxi}{\vec{\xi}}
\newcommand{\given}{|}
\newcommand{\un}[1]{\underaccent{\bar}{#1}}
\newtheorem{theorem}{Theorem}[section]
\newtheorem{lemma}[theorem]{Lemma}
\newtheorem{corollary}[theorem]{Corollary}
\newtheorem{proposition}[theorem]{Proposition}
\begin{document}
\begin{frontmatter}

\title{Loss of memory of hidden Markov models and Lyapunov
exponents\thanksref{T0}}
\runtitle{Loss of memory of hidden Markov models}

\thankstext{T0}{Supported in part by FAPESPs project
\textit{Consistent estimation of stochastic processes
with variable length memory} (2009/09411-8),
CNPqs projects \textit{Stochastic systems with variable length interactions}
(476501/2009-1) and USP project
\textit{Mathematics}, \textit{computation}, \textit{language and the brain}
(11.1.9367.1.5).}

\begin{aug}
\author[A]{\fnms{Pierre} \snm{Collet}\thanksref{t2}\ead[label=e1]{pierre.collet@cpht.polytechnique.fr}}
\and
\author[B]{\fnms{Florencia} \snm{Leonardi}\corref{}\thanksref{t1}\ead[label=e2]{florencia@usp.br}}
\runauthor{P. Collet and F. Leonardi}
\affiliation{Ecole Polytechnique and Universidade de S\~ao Paulo}
\address[A]{Centre de Physique Th\'eorique\\
CNRS UMR 7644\\
Ecole Polytechnique\\
91128 Palaiseau Cedex\\
France \\
\printead{e1}}
\address[B]{Instituto de Matem\'atica e Estat\'{i}stica\\
Universidade de S\~ao Paulo\\
CEP 05508-090, Cidade Universit\'aria\\
S\~ao Paulo, SP\\
Brasil\\
\printead{e2}}
\end{aug}

\thankstext{t2}{Supported in part by the
DynEurBraz European project.}
\thankstext{t1}{Supported in part by a CNPq-Brazil fellowship.}

\received{\smonth{11} \syear{2011}}

%
\begin{abstract}
In this paper we prove that the
asymptotic rate of exponential loss of memory of a
finite state hidden Markov model is bounded above by the
difference of the first two Lyapunov exponents of a certain product of
matrices. We also show that this bound is in fact realized, namely for
almost all realizations of the observed process we can find symbols
where the asymptotic exponential rate of loss of memory attains the
difference of the first two Lyapunov exponents.
These results are derived in particular for the observed process and
for the filter; that is, for the distribution
of the hidden state conditioned on the observed sequence.
We also prove similar results in total variation.
\end{abstract}

%
\begin{keyword}[class=AMS]
\kwd[Primary ]{60J2}
\kwd[; secondary ]{62M09}
\end{keyword}
\begin{keyword}
\kwd{Random functions of Markov chains}
\kwd{Oseledec's theorem}
\kwd{perturbed processes}
\end{keyword}

\end{frontmatter}

\section{Introduction}

Let $(X_t)_{t\in\Z}$ be a Markov chain over a finite alphabet $\A$. We
consider a probabilistic function $(Z_t)_{t\in\Z}$ of this chain, a
model introduced by \citet{petrie1969}. More precisely, there is
another finite alphabet $\B$
and for any $X_{t}$ we
choose at random a $Z_{t}$ in $\B$.
The random choice of $Z_{t}$
depends only on
the value $X_{t}$ of the original process at time $t$.
The process $(Z_t)$ is the observed process and $(X_t)$ is the hidden
process. This model is called a hidden Markov process.

We are interested in the asymptotic loss of memory of the processes
$(X_t)_{t\in\Z}$ and $(Z_t)_{t\in\Z}$ conditioned on the observed
sequence. For example, if the conditional probability of $Z_{t}$ given
$X_{t}$ does not depend on $X_{t}$, the process $(Z_{t})_{t\in\Z}$ is
an independent process. Another trivial example is when there is no
random choice, namely $Z_{t}=X_{t}$, in this case the process
$(Z_{t})_{t\in\Z}$ is Markovian. However, as we will see, under natural
assumptions, the process $(Z_{t})_{t\in\Z}$ has infinite memory. On the
other hand, a particularly interesting question from the point of view
of applications is to consider the loss of memory of the filter; that
is, the distribution of $X_0$ conditioned on the past observed sequence
$Z_{-1},\ldots,Z_{-n+1}$ and for different initial conditions on
$X_{-n}$; see, for example, \citet{cappe2005}.

Our goal is to investigate how fast these processes loose memory.

Exponential upper bounds for this asymptotic loss of memory have been
obtained in various papers; see, for example, \citet{moulines1,moulines2}
and references therein. For the case of projections of Markov chains
and the relation with Gibbs measures, see \citet{chazottes2} and
references therein.

In the present paper, under generic assumptions, we prove that the
asymptotic rate of exponential loss of memory is bounded above by the
difference of the first two Lyapunov exponents of a certain product of
matrices. We also show that this bound is in fact realized, namely for
almost all realizations of the process $(Z_{t})_{t\in\Z}$, we can find
symbols where the asymptotic exponential rate of loss of memory
attains the difference of the first two Lyapunov exponents.
As far as we know our results provide the first lower
bounds for the loss of memory of these processes. Similar results (in
particular lower bounds) are also obtained in the total variation
distance.

Conditioned on the observed sequence $Z_{-1},\ldots, Z_{-n+1}$, we
have considered different possibilities for the initial
distribution at time $-n$, namely one can either give the initial
distribution of $X_{-n}$
or the initial distribution of $Z_{-n}$. Similarly one can ask for the
distribution of $X_0$
(the hidden present state) or of $Z_0$ (the observable present state).

As an application, we consider the case of a randomly perturbed Markov
chain with two symbols. We show that the asymptotic rate of loss of
memory can be expanded in powers of the perturbation with a logarithmic
singularity. This was our original motivation coming from our previous
work with Galves [\citet{collet}].

The relation between product of random matrices and hidden Markov
models was previously described in \citet{JSS}. In this paper it was
proved in particular that the first Lyapunov exponent is the opposite
of the entropy of the process.

The content of the paper is as follows. In Section \ref{main} we give a
precise definition of the asymptotic exponential rate of
loss of memory and state the main results about the relation of this
rate with the first two Lyapunov exponents.

Proofs are given in Section \ref{proofs}. They rely on more general
propositions which allow to treat at once the different situations of
initial distributions and present distributions.
In Section \ref{binary} we give the application to the
random perturbation of a two states Markov chain.

\section{Definitions and main results}\label{main}

Let $(X_t)_{t\in\Z}$ be an irreducible aperiodic Markov chain over a
finite alphabet $\A$ with transition probability matrix
$p(\cdot|\cdot)$ and unique invariant measure $\pi$. Without loss of
generality we will assume $\A=\{1,2,\ldots,k\}$. In the sequel, we
will use the shorthand notation $x_{r}^{s}$ for a sequence of symbols
$(x_{r},\ldots,x_{s})$ ($r\le s$). Consider another finite
alphabet $\B=\{1,2,\ldots,\ell\}$,
and a process
$(Z_t)_{t\in\Z}$, a probabilistic function of the Markov chain
$(X_t)_{t\in\Z}$ over~$\B$. That is, there exists a matrix
$q(\cdot|\cdot) \in\R^{k\times\ell}$ such that for any $n\geq0$,
any $z_0^n\in\B^{n+1}$ and any $x_0^n\in\A^{n+1}$, we have
%
\begin{equation}
\P\bigl(Z_0^n=z_0^n|X_0^n=x_0^n
\bigr) = \prod_{i=0}^n
\P(Z_i=z_i|X_i=x_i) = \prod
_{i=0}^n q(z_i|x_i).
\end{equation}

From now on, the symbol $\un{z}$ will represent an element in
$\B^\Z$. Define the shift-operator $\shift\dvtx\B^\Z\to\B^\Z$
by
\[
(\shift\un{z})_n = z_{n+1}.
\]
The shift is invertible, and its inverse is given by
\[
\bigl(\shift^{-1}\un{z} \bigr)_n = z_{n-1}.
\]

To state our results we will need the following hypotheses:
\begin{longlist}[(H3)]
\item[(H1)] $\min_{i,j} p(j|i) > 0$, $\min_{i,m} q(m|i) > 0$.
\item[(H2)] $\det(p) \neq0$.
\item[(H3)] $\operatorname{rank}(q) = k$.
\end{longlist}
Note that hypothesis (H3) implies $\ell\geq k$.

For the convenience of the reader we recall Oseledec's theorem
in finite dimension; see, for example, \citet{ledrappier,katok1995}.
As usual, we denote by
$\log^+(x) = \max(\log(x),0)$.

\begin{oseledec*}
Let $(\Omega,\mu)$ be a probability space and let $T$ be a measurable
transformation of $\Omega$
such that $\mu$ is $T$-ergodic. Let $L_{\omega}$
be a measurable function from $\Omega$ to $\mathscr{L}(\R^k)$ (the
space of linear operators of $\R^k$ into itself). Assume the function
$L_{\omega}$ satisfies
\[
\int\log^+ \| L_{\omega}\| \,d\mu(\omega) < +\infty.
\]
Then, there exist $\lambda_1 > \lambda_2 > \cdots> \lambda_s$, with
$s\leq k$ and there exists an invariant set $\tilde\Omega\subset
\Omega$
of full measure $(\mu(\Omega\setminus\tilde\Omega) = 0)$ such that for
all $\omega\in\tilde\Omega$ there exist $s+1$ sub-vector spaces
\[
\R^k = V^{(1)}_\omega\supsetneq
V^{(2)}_\omega\supsetneq\cdots\supsetneq V^{(s+1)}_\omega=
\{\vec{0}\}
\]
such that for any $\vec{v}\in V^{(j)}_\omega\setminus
V^{(j+1)}_\omega$ ($1\leq j\leq s$) we have
\[
\lim_{n\to+\infty} \frac1n \log\bigl\| L_{\omega}^{[n]}
\vec{v}\bigr\|= \lambda_j,
\]
where $L_\omega^{[n]} = L_{T^{n-1}(\omega)} \cdots L_\omega$.
Moreover, the subspaces satisfy the relation
\[
L_\omega V^{(j)}_\omega\subseteq
V^{(j)}_{T\omega}.
\]
\end{oseledec*}

The numbers $\lambda_1, \lambda_2,\ldots,\lambda_s$ are called the
Lyapunov exponents.

In the sequel we will use this theorem with $\Omega= \B^\Z$, $\mu$
the stationary ergodic measure
of the process $(Z_t)_{t\in\Z}$ [\citet{cappe2005}], $T = \shift
^{-1}$ and
$L_{\un{z}}$ the linear operator in $\R^k$ with matrix given by
\[
(L_{\un{z}})_{i,j} = q(z_0|i)p(j|i).
\]
With this notation, we have, for example,
\[
\P\bigl(X_0=a,Z_{-n+1}^{-1}=z_{-n+1}^{-1},X_{-n}=b
\bigr)= \bigl\langle\vtheta_b, L^{[n-1]}_{\shift^{-1}\un{z}}
\ones_a\bigr\rangle,
\]
where $(\vtheta_b)_i = p(i|b)$ and $\ones_a$ is the basis vector with
component number $a$ equal to one.

From now on we will use the $\ell^{2}$ norm $\|\cdot\|$ and the
corresponding scalar product on $\R^{k}$.
Note that from our definition of $ L_{\un{z}}$ we have
\[
\sup_{\un{z}} \| L_{\un{z}} \|< +\infty.
\]
Therefore we can apply Oseledec's theorem to get the existence
of the Lyapunov exponents.

For any $\un{z}\in\B^\Z$, for probabilities $\rho$ on $\A$, $\eta
$ on $\B$
and any
integer $n$, we define two probabilities on $\A$
by
\[
\nu^{[n]}_{\un{z}, \rho}(a) =\frac{
\sum_{b\in\A}\P(X_0=a,
Z_{-n+1}^{-1}=z_{-n+1}^{-1},X_{-n}=b)\rho(b)} {
\sum_{b\in\A}\P(Z_{-n+1}^{-1}=z_{-n+1}^{-1},X_{-n}=b)\rho(b)},\qquad a\in\A,
\]
and
\[
\sigma^{[n]}_{\un{z}, \eta}(a) =\frac{
\sum_{c\in\B}\P(X_0=a,
Z_{-n+1}^{-1}=z_{-n+1}^{-1},Z_{-n}=c)\eta(c)} {
\sum_{c\in\B}\P(Z_{-n+1}^{-1}=z_{-n+1}^{-1},Z_{-n}=c)\eta(c)},\qquad a\in\A. %
\]

These are the probabilities of $X_{0}$ conditioned on the observed string
$z_{-n+1}^{-1}$ when the distribution of $X_{-n}$ is $\rho$
(resp., the distribution of $Z_{-n}$ is $\eta$).

When $\rho$ is a Dirac measure concentrated on $b$ we will simply denote
the measure $\nu^{[n]}_{\un{z}, \rho}$ by $\nu^{[n]}_{\un{z}, b}$, and
similarly for
$\sigma^{[n]}_{\un{z}, \eta}$.

We can state now our main results.

\begin{theorem}\label{upperbound}
Under the hypothesis \textup{(H1)}, for each $a\in\A$, for any probabilities
$\rho$
and $\rho'$ on $\A$,
\[
\limsup_{n\to+\infty} \frac1n \log\bigl\llvert\nu^{[n]}_{\un{z}, \rho}(a)-
\nu^{[n]}_{\un{z}, \rho'}(a) \bigr\rrvert\leq\lambda_2-
\lambda_1,
\]
$\mu$-almost surely. Similarly, under the hypothesis \textup{(H1)},
for each $a\in\A$, for any probabilities $\eta$
and $\eta'$ on $\B$,
\[
\limsup_{n\to+\infty} \frac1n \log\bigl\llvert\sigma^{[n]}_{\un{z},
\eta}(a)-
\sigma^{[n]}_{\un{z}, \eta'}(a) \bigr\rrvert\leq\lambda_2-
\lambda_1,
\]
$\mu$-almost surely.
\end{theorem}

\begin{remark*}
When $\A=\B$ and $q$ is the identity matrix, $(Z_{t})_{t\in\Z
}=(X_{t})_{t\in\Z}$ is a
Markov chain. The second part of hypothesis (H1) does not hold, but it
is easy to adapt the proof of Theorem \ref{upperbound} for this
particular case. It is easy to verify recursively that the matrices
$L_{\un{z}}^{[n]}$ are of rank one. The Lyapunov exponents can be
computed explicitly. One gets $\lambda_{1}=-H(p)$ (the entropy of the
Markov chain with transition probability $p$) from the ergodic
theorem, and $\lambda_{2}=-\infty$ with multiplicity $k-1$.
\end{remark*}

\begin{theorem}\label{lowerbound}
Under hypotheses \textup{(H1)--(H2)}, for $\mu$-almost all $\un{z}$
there exists $a,b,c\in\A$ (which may depend on $\un{z}$) such that
\[
\limsup_{n\to+\infty} \frac1n \log\bigl\llvert\nu^{[n]}_{\un{z}, b}(a)-
\nu^{[n]}_{\un{z}, c}(a) \bigr\rrvert= \lambda_2-
\lambda_1.
\]
Under hypotheses \textup{(H1)--(H3)}, for $\mu$-almost all $\un{z}$
there exists $a\in\A$, $b, c\in\B$ (which may depend on $\un{z}$)
such that
\[
\limsup_{n\to+\infty} \frac1n \log\bigl\llvert\sigma^{[n]}_{\un{z}, b}(a)-
\sigma^{[n]}_{\un{z}, c}(a) \bigr\rrvert= \lambda_2-
\lambda_1.
\]
\end{theorem}

As a corollary, we derive equivalent results for the loss of memory of
the process $(Z_{t})_{t\in\Z}$.
For any $\un{z}\in\B^\Z$, for probabilities $\rho$ on $\A$, $\eta
$ on $\B$
and any
integer $n$, we define two probabilities on $\B$
by
\[
\tilde\nu^{[n]}_{\un{z}, \rho}(e) =\frac{
\sum_{b\in\A}\P(Z_0=e,
Z_{-n+1}^{-1}=z_{-n+1}^{-1},X_{-n}=b)\rho(b)} {
\sum_{b\in\A}\P(Z_{-n+1}^{-1}=z_{-n+1}^{-1},X_{-n}=b)\rho(b)},\qquad e\in\B,
\]
and
\[
\tilde\sigma^{[n]}_{\un{z}, \eta}(e) = \frac{\sum_{c\in\B}\P
(Z_0=e,Z_{-n+1}^{-1}=z_{-n+1}^{-1},Z_{-n}=c)\eta(c)} {
\sum_{c\in\B}\P(Z_{-n+1}^{-1}=z_{-n+1}^{-1},Z_{-n}=c)\eta(c)},\qquad e\in\B.
\]
%
\begin{corollary}\label{cormain}
Under the hypothesis \textup{(H1)}, for each $e\in\B$, for any probabilities
$\rho$
and $\rho'$ on $\A$,
\[
\limsup_{n\to+\infty} \frac1n \log\bigl\llvert\tilde
\nu^{[n]}_{\un{z}, \rho}(e)-\tilde\nu^{[n]}_{\un{z},
\rho'}(e)
\bigr\rrvert\leq\lambda_2-\lambda_1,
\]
$\mu$-almost surely. Similarly, under the hypothesis \textup{(H1)},
for each $e\in\B$, for any probabilities $\eta$
and $\eta'$ on $\B$,
\[
\limsup_{n\to+\infty} \frac1n \log\bigl\llvert\tilde
\sigma^{[n]}_{\un{z}, \eta}(e)-\tilde\sigma^{[n]}_{\un
{z}, \eta'}(e)
\bigr\rrvert\leq\lambda_2-\lambda_1,
\]
$\mu$-almost surely.

Moreover, under hypotheses \textup{(H1)--(H3)}, for $\mu$-almost all $\un{z}$
there exists $e\in\B$, $b, c \in\A$ (which may depend on
$\un{z}$) such that
\[
\limsup_{n\to+\infty} \frac1n \log\bigl\llvert\tilde
\nu^{[n]}_{\un{z}, b}(e)-\tilde\nu^{[n]}_{\un{z}, c}(e)
\bigr\rrvert= \lambda_2-\lambda_1.
\]
Under hypotheses \textup{(H1)--(H3)}, for $\mu$-almost all $\un{z}$
there exists $e,b, c\in\B$ (which may depend on $\un{z}$) such that
\[
\limsup_{n\to+\infty} \frac1n \log\bigl\llvert\tilde
\sigma^{[n]}_{\un{z}, b}(e)-\tilde\sigma^{[n]}_{\un{z}, c}(e)
\bigr\rrvert= \lambda_2-\lambda_1.
\]
\end{corollary}

From a practical point of view, one can prove various lower bounds for
the quantity $\lambda_{2}-\lambda_{1}$. As an example we give the
following result.

Let
\[
\Gamma= \frac{1}{\min_{m,i}\{q(m|i)\}}. %
\]

\begin{proposition}\label{diff}
Under hypotheses \textup{(H1)--(H2)} we have
\[
\lambda_2-\lambda_1 \ge\frac{1}{k-1} \log\bigl|\det(p)
\bigr| -\frac{k}{k-1}\log\Gamma. %
\]
\end{proposition}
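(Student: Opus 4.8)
The plan is to derive the bound by first controlling the sum of all the Lyapunov exponents through a determinant computation and then upgrading this to an estimate on $\lambda_{2}$ using the simplicity of the top exponent $\lambda_{1}$. Write $L_{\un{z}}=p\,D(z_{0})$, where we identify $p$ with the matrix $(p(j|i))_{i,j}$ and $D(m)=\mathrm{diag}(q(m|1),\dotsc,q(m|k))$, so that
\begin{equation*}
L_{\un{z}}^{[n]}\;=\;p\,D(z_{-n+1})\,p\,D(z_{-n+2})\dotsm p\,D(z_{0})\,,
\end{equation*}
and hence
\begin{equation*}
\det L_{\un{z}}^{[n]}\;=\;\det(p)^{\,n}\prod_{i=0}^{n-1}\prod_{j=1}^{k}q(z_{-i}|j)\,.
\end{equation*}
Under (H1--H2) each factor is invertible (because $\det(p)\neq0$ and $q(m|j)>0$), so no Lyapunov exponent equals $-\infty$; let $\mu_{1}\geq\mu_{2}\geq\dotsb\geq\mu_{k}$ be the Lyapunov exponents listed with multiplicities, so $\mu_{1}=\lambda_{1}$. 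Using the standard consequence of Oseledec's theorem that $\sum_{j=1}^{k}\mu_{j}=\lim_{n}\frac1n\log|\det L_{\un{z}}^{[n]}|$, together with Birkhoff's ergodic theorem applied to the bounded function $\un{z}\mapsto\sum_{j=1}^{k}\log q(z_{0}|j)$, we obtain
\begin{equation*}
\sum_{j=1}^{k}\mu_{j}\;=\;\log|\det(p)|+\E\Bigl[\,\sum_{j=1}^{k}\log q(Z_{0}|j)\,\Bigr]\;\geq\;\log|\det(p)|-k\log R\,,
\end{equation*}
the last step because $q(m|j)\geq 1/R$ for all $m,j$.

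Next I would record two facts about $\lambda_{1}$. First, $\lambda_{1}\leq 0$: the entries of $L_{\un{z}}$ are non-negative and $L_{\un{z}}\ones\leq\ones$ componentwise (since $\sum_{j}p(j|i)q(z_{0}|j)\leq\sum_{j}p(j|i)=1$), so $L_{\un{z}}^{[n]}\ones\leq\ones$ by induction, the operator norms $\|L_{\un{z}}^{[n]}\|$ stay bounded, and therefore $\lambda_{1}=\lim_{n}\frac1n\log\|L_{\un{z}}^{[n]}\|\leq 0$. Second, and this is the essential point, $\lambda_{1}$ is simple: by (H1) the matrices $L_{\un{z}}$ have strictly positive entries, with Hilbert projective diameters bounded uniformly in $\un{z}$, so Birkhoff's contraction theorem forces the images of the positive cone under $L_{\un{z}}^{[n]}$ to collapse exponentially fast onto a one-dimensional direction; this gives $\dim V_{\un{z}}^{(2)}=k-1$, i.e.\ $\lambda_{2}=\mu_{2}$. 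Combining this with $\mu_{2}\geq\mu_{3}\geq\dotsb\geq\mu_{k}$ yields
\begin{equation*}
(k-1)\,\lambda_{2}\;=\;(k-1)\,\mu_{2}\;\geq\;\mu_{2}+\mu_{3}+\dotsb+\mu_{k}\;=\;\Bigl(\sum_{j=1}^{k}\mu_{j}\Bigr)-\lambda_{1}\,.
\end{equation*}

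Putting the pieces together, subtract $(k-1)\lambda_{1}$ from both sides of the last inequality:
\begin{equation*}
(k-1)(\lambda_{2}-\lambda_{1})\;\geq\;\sum_{j=1}^{k}\mu_{j}-k\,\lambda_{1}\;\geq\;\sum_{j=1}^{k}\mu_{j}\;\geq\;\log|\det(p)|-k\log R\,,
\end{equation*}
where the middle inequality uses $\lambda_{1}\leq 0$ and the last one the estimate from the first paragraph; dividing by $k-1$ gives the proposition (the case $k=1$ is excluded by the statement itself, and one checks that only strict positivity from (H1) and $\det(p)\neq0$ are actually needed). The only nontrivial ingredient — and where I expect the real work to lie — is the simplicity of $\lambda_{1}$, which is precisely the place where the strict positivity in (H1) is indispensable: without it one gets only the weaker bound with $k-1$ replaced by $k-m_{1}$, $m_{1}$ being the multiplicity of $\lambda_{1}$, and the numerator being non-positive this is genuinely weaker.
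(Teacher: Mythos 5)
Your proof is correct and follows essentially the same route as the paper: you combine the identity ``sum of Lyapunov exponents $=$ expected log-determinant'' with the factorization $\det L_{\un{z}}=\bigl(\prod_i q(z_0|i)\bigr)\det(p)$, the bound $\lambda_1\le 0$ from $L_{\un{z}}\ones\le\ones$, and the simplicity of $\lambda_1$ (which the paper supplies via Lemma~\ref{codim}, exactly the Birkhoff--Hopf contraction fact you invoke). The only cosmetic difference is that you compute $\lim_n\frac1n\log|\det L^{[n]}_{\un z}|$ via Birkhoff's theorem instead of quoting $\E_\mu[\log|\det L_{\un z}|]$ directly, which changes nothing of substance.
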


We now state a related result using the total variation distance
between the distributions. This result will
only use a weaker version of hypothesis (H3), namely

(H3$'$), there exists $b,c\in\B$ in
and $i\in\A$ such that
%
\begin{equation}
\label{pasegaux} q(b|i)\neq q(c|i).
\end{equation}
Note that under this hypothesis, we do not assume any relation between
the cardinality of $\A$ and the cardinality of $\B$ (we require of
course the cardinality of $\B$ being at least two).

We recall that the total variation distance $\operatorname{TV}(\nu_{1},\nu_{2})$
between two measures $\nu_{1}$ and
$\nu_{2}$ on $\A$ is defined by
\[
\operatorname{TV}(\nu_{1},\nu_{2})=\frac{1}{2} \sum
_{a\in\A} \bigl|\nu_{1}(a)-\nu_{2}(a) \bigr|.
\]
Similar definitions are given for two measures on $\B$.

It follows at once that under the hypothesis (H1) we have $\mu$-almost surely
\[
\limsup_{n\to+\infty} \frac1n \log \operatorname{TV} \bigl( \nu^{[n]}_{\un{z}, \rho}(a)-
\nu^{[n]}_{\un{z}, \rho'}(a) \bigr) \leq\lambda_2-
\lambda_1,
\]
and similarly for the measures $\sigma$, $\tilde\nu$ and
$\tilde\sigma$. In order to state a lower bound for these quantities,
we need to recall a result about Lyapunov dimensions.

We denote by $s$ the number of different Lyapunov exponents, and by
$m_{i}$ ($1\le i\le s$)
the multiplicity of the exponent $\lambda_{i}$, namely
\[
m_{i}=\operatorname{dim}\bigl(V^{(i)}_\omega\bigr) -
\operatorname{dim}\bigl(V^{(i+1)}_\omega\bigr). %
\]
It follows from Oseledec's theorem that these numbers are $\mu$-almost
surely constant.

\begin{theorem}\label{tv}
Assume hypotheses \textup{(H1)--(H2)}. Then for $\mu$-almost every
$\un{z}$ and for any pair $(b,c)$ of elements in $\B$ satisfying
\textup{(H3$'$)} we have
\begin{eqnarray*}
\liminf_{n\to\infty}\frac{1}{n}\log \operatorname{TV} \bigl(
\sigma^{[n]}_{\un{z},b},\sigma^{[n]}_{\un{z},c} \bigr)
&\ge& \lambda_{s}-\lambda_{1}+\sum
_{i=2}^{s}m_{i} (\lambda_{i}-
\lambda_{2} )
\\
&\ge& 2 \bigl({\log}|{\det p}|-k\log\Gamma\bigr).
\end{eqnarray*}
\end{theorem}

\begin{remark*}
Similar lower bounds for the total variation distance between the measures
$\nu^{[n]}_{\un{z},b}$ and $\nu^{[n]}_{\un{z},c}$ can be proven
under hypotheses
(H1)--(H2). In the case of the total variation distance
between $\tilde\sigma^{[n]}_{\un{z},b}$ and $\tilde\sigma
^{[n]}_{\un{z},c}$ (resp.,
between $\tilde\nu^{[n]}_{\un{z},b}$ and $\tilde\nu^{[n]}_{\un
{z},c}$) we can prove also the same lower
bounds, but this requires the full set of hypotheses (H1)--(H3).
\end{remark*}

\section{Proofs}\label{proofs}

We begin by proving some lemmas which will be useful later. We
introduce the order $(\R^k,\leq)$ given by $\vec{v}\leq\vec{w}$ if and
only if $v_i\leq w_i$ for all $i=1,\ldots,k$. When needed, we will also
make use of the symbols $<$, $>$ and $\geq$, defined in an analogous
way. Note that since the matrices $L_{\un{z}}$ have strictly positive
entries, if $\vec{v}\leq\vec{w}$, then $L_{\un{z}}\vec{v}\leq
L_{\un{z}}\vec{w}$. We will use the notation $\ones\in\R^k$ for the
vector with components $(\ones)_i = 1$ for each $i=1,\ldots,k$ and
the notation $\ones_a\in\R^k$ for the
vector with components $(\ones_a)_a = 1$ and $(\ones_a)_i = 0$ for
$i\neq a$.

%
\begin{lemma}\label{signos}
Under hypothesis \textup{(H1)}, if $\vxi\in V_{\un{z}}^{(2)}\setminus\{\vec
{0}\}$, then
$\vxi$ has two nonzero components of opposite signs, $\mu$-almost surely.
\end{lemma}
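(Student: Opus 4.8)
The plan is to argue by contradiction, exploiting the fact that $V_{\un{z}}^{(2)}$ is exactly the subspace of vectors whose images under $L_{\un z}^{[n]}$ grow at rate $\lambda_2 < \lambda_1$, together with positivity of the matrices. Suppose for a contradiction that on a set of positive $\mu$-measure there exists $\vxi \in V_{\un z}^{(2)}\setminus\{\vec 0\}$ with all components of one sign; replacing $\vxi$ by $-\vxi$ if necessary we may assume $\vxi \geq \vec 0$, $\vxi \neq \vec 0$. The key observation is that $\ones$ is (up to scaling) the leading direction: since $L_{\un z}^{[n]}\ones$ has $i$-th component $\sum_x q(z_{-n+1}|x)\cdots$ summing the stationary-type mass, one checks $\Vert L_{\un z}^{[n]}\ones\Vert$ decays at rate $\lambda_1$, i.e.\ $\ones \notin V_{\un z}^{(2)}$, so $\lim \frac1n\log\Vert L_{\un z}^{[n]}\ones\Vert = \lambda_1$.

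The heart of the argument is a comparison: since $\vxi \geq \vec 0$ and $\vxi\neq\vec 0$, and the entries of $L_{\un z}$ are bounded below by $\delta := \min_{i,j}p(j|i)\cdot\min_{i,m}q(m|i) > 0$ by (H1), one gets after a single step that $L_{\un z}\vxi$ has \emph{all} components strictly positive and in fact $L_{\un z}\vxi \geq c\,\ones$ for some $c>0$ (with $c$ comparable to $\Vert\vxi\Vert_1$). Monotonicity of $L_{\un z}$ on the order cone then gives $L_{\un z}^{[n]}\vxi = L_{\shift^{-1}\un z}^{[n-1]}(L_{\un z}\vxi) \geq c\, L_{\shift^{-1}\un z}^{[n-1]}\ones$ componentwise, hence $\Vert L_{\un z}^{[n]}\vxi\Vert \geq c\,\Vert L_{\shift^{-1}\un z}^{[n-1]}\ones\Vert$. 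Taking $\frac1n\log$ and $n\to\infty$, the right-hand side tends to $\lambda_1$ (applied at the base point $\shift^{-1}\un z$, which also lies in the full-measure Oseledec set), forcing $\lim\frac1n\log\Vert L_{\un z}^{[n]}\vxi\Vert \geq \lambda_1$. But $\vxi\in V_{\un z}^{(2)}$ means this limit is $\leq \lambda_2 < \lambda_1$, a contradiction.

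Therefore $\mu$-almost surely every nonzero $\vxi\in V_{\un z}^{(2)}$ has a strictly positive and a strictly negative component; in particular it has two nonzero components of opposite signs, which is the claim.

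I expect the main obstacle to be the bookkeeping around the Oseledec full-measure set and the direction of iteration: one must verify that $\ones$ genuinely realizes the top exponent $\lambda_1$ (this uses irreducibility/positivity, essentially that the cone is mapped strictly inside itself so the Perron–Frobenius direction dominates), and that the shifted base points $\shift^{-1}\un z$, $T^{n}\un z$ stay in the invariant full-measure set on which the Oseledec limits hold, so that the comparison inequality can be pushed to the limit uniformly. A minor point is making the constant $c$ in $L_{\un z}\vxi \geq c\ones$ explicit enough (any positive multiple of $\Vert\vxi\Vert$ works and the precise value is irrelevant since only the exponential rate matters).
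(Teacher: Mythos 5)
Your overall strategy is exactly the paper's: assume $\vxi\in V_{\un z}^{(2)}\setminus\{\vec 0\}$ has all components of one sign, deduce $L_{\un z}\vxi\geq c\,\ones$ with $c>0$ from (H1), push through with monotonicity to get $\Vert L_{\un z}^{[n]}\vxi\Vert\geq c\,\Vert L_{\shift^{-1}\un z}^{[n-1]}\ones\Vert$, and conclude the growth rate is at least $\lambda_1$, contradicting $\vxi\in V_{\un z}^{(2)}$.

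However, the step you flag yourself as needing verification --- that $\Vert L_{\shift^{-1}\un z}^{[n-1]}\ones\Vert$ grows at rate $\lambda_1$ --- is a genuine gap, and the way you gesture at closing it ($\ones\notin V_{\un z}^{(2)}$, hence the limit equals $\lambda_1$) is dangerously circular: the statement $\ones\notin V_{\un z}^{(2)}$ is precisely an instance of the lemma you are proving, since $\ones$ has all nonnegative components. The "summing the stationary-type mass" heuristic is not a proof. The paper closes this cleanly and noncircularly: pick any $\vec w\in V^{(1)}_{\shift^{-1}\un z}\setminus V^{(2)}_{\shift^{-1}\un z}$ (guaranteed to exist by Oseledec's theorem, and realizing $\lambda_1$ on the invariant full-measure set), and use positivity of the matrix entries plus the pointwise bound $|\vec w|\leq\Vert\vec w\Vert\,\ones$ to get
$\Vert L_{\shift^{-1}\un z}^{[n-1]}\vec w\Vert\leq\Vert L_{\shift^{-1}\un z}^{[n-1]}|\vec w|\Vert\leq\Vert\vec w\Vert\,\Vert L_{\shift^{-1}\un z}^{[n-1]}\ones\Vert$.
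This shows $\liminf_n \frac1n\log\Vert L_{\shift^{-1}\un z}^{[n-1]}\ones\Vert\geq\lambda_1$ without ever needing to decide whether $\ones\in V^{(2)}$. With that substitution your proof is complete and matches the paper's.
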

%

\begin{pf}
Assume there exits $\vxi\in V_{\un{z}}^{(2)}\setminus\{\vec{0}\}$ with
$\vxi_i\geq0$ for all $i=1,\ldots,k$.
Then, from hypothesis (H1) it follows that there exists $\alpha>0$
such that, for all $\un{z}$,
\[
L_{\un{z}}\vxi\geq\alpha\|\vxi\|\ones.
\]
One may take, for example,
\[
\alpha= \frac{1}{\sqrt{k}} \inf_{z_0,i,j} q(z_0|i)p(j|i)
= \frac{1}{\sqrt{k}} \inf_{\un{z},i,j}(L_{\un{z}})_{i,j}. %
\]
We can apply $L^{[n-1]}_{\shift^{-1}\un{z}}$ to both sides, use
monotonicity and take norms, to obtain
\[
\bigl\| L^{[n]}_{\un{z}}\vxi\bigr\|\geq\alpha\|\vxi\|\bigl\|
L^{[n-1]}_{\shift^{-1}\un{z}}\ones\bigr\|. %
\]
Let $\vec{w}\in V_{\shift^{-1}\un{z}}^{(1)}\setminus V_{\shift
^{-1}\un{z}}^{(2)}$. Then
\[
\bigl\| L^{[n-1]}_{\shift^{-1}\un{z}}\vec{w}\bigr\|\leq\bigl\|
L^{[n-1]}_{\shift^{-1}\un{z}}|
\vec{w}|\bigr\|\leq\|\vec{w}\|\bigl\| L^{[n-1]}_{\shift^{-1}\un
{z}}\ones
\bigr\|\le\frac{\|\vec{w}\|}{\alpha\|\vxi\|}\bigl\|
L^{[n]}_{\un{z}}\vxi\bigr\|.
\]
Therefore
\[
\bigl\| L^{[n]}_{\un{z}}\vxi\bigr\|\geq\frac{\alpha\|
\vxi\|}{\|\vec{w}\|} \bigl\|
L^{[n-1]}_{\shift^{-1}\un
{z}}\vec{w}\bigr\|,
\]
and using Oseledec's theorem we have $\mu$-almost surely that
\[
\lim_{n\to+\infty} \frac1n \log\bigl\| L^{[n]}_{\un{z}}
\vxi\bigr\|\geq\lim_{n\to+\infty} \frac1n \log\bigl\| L^{[n-1]}_{\shift
^{-1}\un{z}}
\vec{w}\bigr\|= \lambda_1,
\]
which contradicts the fact that $\vxi\in V_{\un{z}}^{(2)}\setminus\{
\vec{0}\}$.
\end{pf}

%
\begin{lemma}\label{codim}
Under hypothesis \textup{(H1)} we have $\operatorname{Codim}(V_{\un{z}}^{(2)}) = 1$,
$\mu$-almost surely.
\end{lemma}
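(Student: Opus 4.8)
The plan is to prove a bit more, namely that $\mu$-a.s.\ any two vectors of the open positive cone $C=\{v\in\R^k:v_i>0\ \forall i\}$ are proportional modulo $V^{(2)}_{\un z}$. Granting this, the lemma follows at once: $C$ spans $\R^k$, while Lemma~\ref{signos} gives $C\cap V^{(2)}_{\un z}=\{\vec{0}\}$ (a nonzero vector of $V^{(2)}_{\un z}$ has components of both signs, hence is not in $\bar C$), so $\R^k/V^{(2)}_{\un z}$ is spanned by the image of any single $v_0\in C$ and is nonzero, i.e.\ one-dimensional. The engine is the Birkhoff contraction of the matrices $L_{\un z}$ in the Hilbert projective metric, available thanks to (H1).

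First I would set up the contraction. Let $\Theta$ be the Hilbert projective metric on $C$. By Birkhoff's theorem a strictly positive matrix $A$ satisfies $\Theta(Au,Av)\le\tau(A)\,\Theta(u,v)$ with $\tau(A)<1$, and $A(\bar C\setminus\{\vec{0}\})$ has finite $\Theta$-diameter, both governed by the projective diameter $N(A)=\max_{i,j,r,s}A_{ij}A_{rs}/(A_{is}A_{rj})$. Since $(L_{\un z})_{i,j}=q(z_0|j)p(j|i)$ has product form, the $q$-factors cancel in $N(L_{\un z})$, so under (H1) there is $N_0<\infty$ with $N(L_{\un z})\le N_0$ for every $\un z$; set $\tau_0=\tau(N_0)<1$. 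Applying $L_{\un z}$ once and then contracting $n-1$ times gives, for $u,v\in C$ and $n\ge 1$, $\Theta(L^{[n]}_{\un z}u,L^{[n]}_{\un z}v)\le\tau_0^{\,n-1}\log N_0$. Moreover, for $n\ge1$ the normalized images $\widehat{L^{[n]}_{\un z}u}=L^{[n]}_{\un z}u/\Vert L^{[n]}_{\un z}u\Vert$ all lie in one fixed compact subset $K$ of $C$ (the ratio of the largest to the smallest entry of $L_{\un z}$ is bounded uniformly in $\un z$ by (H1)), on which $\Theta$ and the Euclidean distance are comparable; hence $\Vert\widehat{L^{[n]}_{\un z}u}-\widehat{L^{[n]}_{\un z}v}\Vert\le C_0\,\tau_0^{\,n}$.

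Next I would promote this projective statement to one about a fixed vector. Fix $u,v\in C$ and put $s_n=\Vert L^{[n]}_{\un z}u\Vert/\Vert L^{[n]}_{\un z}v\Vert>0$. Writing $L^{[n+1]}_{\un z}=L_{\shift^{-n}\un z}L^{[n]}_{\un z}$, one has $s_{n+1}/s_n=\Vert L_{\shift^{-n}\un z}\widehat{L^{[n]}_{\un z}u}\Vert/\Vert L_{\shift^{-n}\un z}\widehat{L^{[n]}_{\un z}v}\Vert$, and since under (H1) $\Vert L_{\un z}\Vert$ is bounded above while $\Vert L_{\un z}w\Vert$ is bounded below for unit $w\in K$, the bound of the previous paragraph gives $|s_{n+1}/s_n-1|\le C_1\tau_0^{\,n}$. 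Hence $s_n\to s_\infty$ for some $0<s_\infty<\infty$, with $|s_n-s_\infty|\le C_2\tau_0^{\,n}$. Now $u,v\in C$ are not in $V^{(2)}_{\un z}$ by Lemma~\ref{signos}, so by Oseledec's theorem $\Vert L^{[n]}_{\un z}u\Vert$ and $\Vert L^{[n]}_{\un z}v\Vert$ both grow at the exponential rate $\lambda_1$; therefore
\[
\Vert L^{[n]}_{\un z}(u-s_\infty v)\Vert\;\le\;\Vert L^{[n]}_{\un z}(u-s_n v)\Vert+|s_n-s_\infty|\,\Vert L^{[n]}_{\un z}v\Vert\;\le\;C\,\tau_0^{\,n}\big(\Vert L^{[n]}_{\un z}u\Vert+\Vert L^{[n]}_{\un z}v\Vert\big),
\]
whence $\limsup_{n}\frac1n\log\Vert L^{[n]}_{\un z}(u-s_\infty v)\Vert\le\lambda_1+\log\tau_0<\lambda_1$. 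Since Oseledec's theorem identifies $V^{(2)}_{\un z}$ with $\{w\in\R^k:\limsup_n\frac1n\log\Vert L^{[n]}_{\un z}w\Vert<\lambda_1\}$, this means precisely $u-s_\infty v\in V^{(2)}_{\un z}$, which is the claim of the first paragraph and hence $\textup{Codim}(V^{(2)}_{\un z})=1$, $\mu$-a.s.

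The delicate point is the passage to $u-s_\infty v$: the Birkhoff estimate only yields \emph{projective} convergence of $L^{[n]}_{\un z}u$ toward $L^{[n]}_{\un z}v$, and one must upgrade the a priori merely subexponential scalar ratio $s_n$ to a genuine, exponentially fast, limit $s_\infty$ before one is allowed to put the constant $s_\infty$ in front of $v$. Everything else is the standard Birkhoff/Hilbert-metric toolkit together with elementary bookkeeping of Lyapunov growth rates, using Lemma~\ref{signos} only through the inclusion $C\cap V^{(2)}_{\un z}=\{\vec{0}\}$.
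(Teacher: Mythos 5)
Your proof is correct and runs on essentially the same engine as the paper's: Birkhoff's contraction of the strictly positive matrices $L_{\un z}$ in the Hilbert projective metric, upgraded to exponential convergence of a scalar, yielding $u-s_\infty v\in V^{(2)}_{\un z}$ for any two positive vectors. The only differences are presentational: you prove the proportionality statement directly and then read off the codimension from $C$ spanning $\R^k$ and $C\cap V^{(2)}_{\un z}=\emptyset$, whereas the paper assumes $\textup{Codim}\ge 2$ for a contradiction and tracks the componentwise ratios $\gamma_n,\delta_n$ instead of the norm ratio $s_n$ — the same computation in a different parametrization. (One cosmetic slip: since $C$ is the \emph{open} positive cone, Lemma~\ref{signos} gives $C\cap V^{(2)}_{\un z}=\emptyset$ rather than $\{\vec 0\}$, but this only strengthens your conclusion.)
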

%

\begin{pf}
Assume $\operatorname{Codim}(V_{\un{z}}^{(2)}) \geq2$.
Since any vector $\vec w_{1}$ of norm one in the cone
$\C_k = \{\vec{w}\dvtx\vec{w}>0\}$ does not belong to
$V_{\un{z}}^{(2)}$ (by Lemma \ref{signos}), the vector space $V_{\un
{z}}^{(2)}\oplus\R
\vec w_{1}$ is of codimension at least one,
$\mu$-almost surely. Therefore we can find a vector
$\vec w_{2}$ of norm one
in $\C_k\setminus(V_{\un{z}}^{(2)}\oplus\R\vec w_{1}
)$. Note that
%
\begin{equation}
\label{v2indep} \inf_{\vec y\in V_{\un{z}}^{(2)}, \gamma} \|\vec
w_{1}-\gamma\vec
w_{2}-\vec y \| > 0
\end{equation}
since otherwise, the minimum is reached at a finite nonzero pair
$(\gamma,\vec
y)$ which would contradict $\vec w_{2}\in\C_k\setminus
(V_{\un{z}}^{(2)}\oplus\R\vec w_{1} )$.
Let $\un{z}$ be a fixed element in $\B^{\Z}$.\vadjust{\goodbreak}

Define
\[
\gamma_n = \max_{i}\frac{ (L_{\un{z}}^{[n]}\vec{w}_1 )_{i}} {
(L_{\un{z}}^{[n]}\vec{w}_2 )_{i}} \quad\mbox{and}\quad
\delta_n = \min_{i}\frac{ (L_{\un{z}}^{[n]}\vec{w}_1 )_{i}} {
(L_{\un{z}}^{[n]}\vec{w}_2 )_{i}}.
\]
Let
\[
\phi= \inf_{\un{z}}\min_{i,j,r,s}
\frac{(L_{\un{z}})_{r,j}
(L_{\un{z}})_{s,i} } {
(L_{\un{z}})_{s,j} (L_{\un{z}})_{r,i}}. %
\]
It follows from hypothesis (H1) that $\phi>0$.
Let
\[
\alpha= \frac{1-\sqrt\phi}{1+\sqrt\phi} < 1. %
\]
From the Birkhoff--Hopf theorem [see, e.g., \citet{cavazos}],
there exists a constant $\beta>0$
such that for all $\un{z}\in\B^\Z$ and all $n$,
%
\begin{equation}
\label{bir} 1 \leq\frac{\gamma_n}{\delta_n} \leq1+ \beta\alpha^n.
\end{equation}
We now prove that
\[
\frac{\gamma_n}{1+\beta\alpha^n} \leq\delta_{n+1} \leq\gamma_{n+1} \leq
\gamma_n.
\]
To see this observe that $\delta_{n+1}\leq\gamma_{n+1}$ by
definition. We also have by monotonicity of $L_{\un{z}}$
\begin{eqnarray*}
\gamma_{n+1} &=& \max_{i} \frac{(L_{\un{z}}^{[n+1]}\vec{w}_1)_i}{(L_{\un
{z}}^{[n+1]}\vec{w}_2)_i} = \max
_{i}\frac{(L_{\shift^{-n}\un{z}}L_{\un{z}}^{[n]}\vec{w}_1)_i} {
(L_{\un{z}}^{[n+1]}\vec{w}_2)_i}
\\
&\leq& \max_{i}\frac{(L_{\shift^{-n}\un{z}}\gamma_{n} L_{\un
{z}}^{[n]}\vec{w}_2)_i} {
(L_{\un{z}}^{[n+1]}\vec{w}_2)_i} = \gamma_n
\end{eqnarray*}
and also
\[
\delta_{n+1} \ge\delta_{n}= \gamma_n
\frac{\delta
_n}{\gamma_n} \geq\frac{\gamma_n}{1+\beta\alpha^n}.
\]
Since the sequence $(\gamma_{n})$ is decreasing,
there exists $\gamma^*$ and $\beta'>0$ such that
\[
\bigl|\gamma_n - \gamma^*\bigr| \leq\beta' \alpha^n.
\]
On the other hand, it follows immediately from (\ref{bir}) that
for any $i=1,\ldots,k$, we have
\[
- \frac{\gamma_n \beta\alpha^n (L_{\un{z}}^{[n]}\vec
{w}_2)_i}{1+\beta\alpha^n} \leq\bigl(L_{\un{z}}^{[n]}\vec
{w}_1 \bigr)_i-\gamma_n
\bigl(L_{\un{z}}^{[n]}\vec{w}_2 \bigr)_i
\leq0.
\]
Then there exists $\beta''>0$ such that
\[
\frac{\| L^{[n]}_{\un{z}}\vec{w}_1 - \gamma_nL^{[n]}_{\un
{z}}\vec{w}_2\|}{\| L^{[n]}_{\un{z}}\vec{w}_2\|} \leq\beta''
\alpha^n.\vadjust{\goodbreak}
\]
This implies
\begin{eqnarray*}
\frac{\| L^{[n]}_{\un{z}}\vec{w}_1 - \gamma^*L^{[n]}_{\un
{z}}\vec{w}_2\|}{\| L^{[n]}_{\un{z}}\vec{w}_2\|} &\leq& \frac
{|\gamma_n - \gamma^*| \| L^{[n]}_{\un{z}}\vec
{w}_2\|}{\| L^{[n]}_{\un{z}}\vec{w}_2\|} + \frac{\|
L^{[n]}_{\un{z}}\vec{w}_1 - \gamma_nL^{[n]}_{\un{z}}\vec{w}_2\|
}{\| L^{[n]}_{\un{z}}\vec{w}_2\|}
\\
&\leq& \bigl(\beta'+\beta'' \bigr)
\alpha^n.
\end{eqnarray*}
Since $\vec{w}_1$ and $\vec{w}_2$ are linearly independent, we have
$\vec{w}_1-\gamma^*\vec{w}_2 \neq\vec{0}$. This and the previous
inequality imply that
\[
\lim_{n\to+\infty} \frac1n\log\bigl\| L^{[n]}_{\un{z}}
\bigl(\vec{w}_1 - \gamma^*\vec{w}_2 \bigr)\bigr\|\leq
\lambda_1 + \log\alpha< \lambda_1,
\]
then $\vec{w}_1 - \gamma^*\vec{w}_2\in V^{(2)}_{\un{z}}\setminus\{
0\}$,
and this contradicts (\ref{v2indep}).
\end{pf}

The proof of Theorem \ref{upperbound} will follow from the next
proposition.

%
\begin{proposition}\label{propupperbound}
Let $(\vpsi_{a})_{a\in\A}$ be a basis of $\R^{k}$ satisfying
$\vpsi_{a}\ge0$ for any~$a$.
Let $\vtheta_{1}>0$ and $\vtheta_{2}>0$ be two vectors in $\R^{k}$.
Then
\[
\limsup_{n\to\infty}\frac{1}{n}\log\biggl\llvert
\frac{\langle\vtheta_1, L^{[n-1]}_{\shift^{-1}\un{z}}\vpsi_a\rangle}{
\langle\vtheta_1, L^{[n-1]}_{\shift^{-1}\un{z}}\sum_{a}\vpsi_a\rangle}
- \frac{
\langle\vtheta_2, L^{[n-1]}_{\shift^{-1}\un{z}}\vpsi_a\rangle}{
\langle\vtheta_2, L^{[n-1]}_{\shift^{-1}\un{z}}\sum_{a}\vpsi_a\rangle}
\biggr\rrvert\le\lambda_{2}-
\lambda_{1}.
\]
\end{proposition}
%

\begin{pf}
By Lemma \ref{signos}, since $(\vpsi_a)_i\ge0$ for any $i=1,\ldots,k$
then $\vpsi_a\notin V^{(2)}_{\un{z}}$, $\mu$-almost surely.
In the same way, from Lemma \ref{signos} we have
\[
\vpsi=\sum_{a\in\A}\vpsi_{a} \in
V^{(1)}_{\un{z}}\setminus V^{(2)}_{\un{z}}.
\]
Note also that since $(\vpsi_{a})_{a\in\A}$ form a basis of
nonnegative vectors, we must have $\vpsi_i>0$ for all $i=1,\ldots,k$.
Therefore, by Lemma \ref{codim} we have that for any $a\in\A$,
%
\begin{equation}
\label{decomp} \vpsi_a = u_a\vpsi+
\vxi_a,
\end{equation}
where $\vxi_a\in V^{(2)}_{\un{z}}$, $u_a\neq0$, and this
decomposition is unique.
Then
\[
\frac{\langle\vtheta_j, L^{[n-1]}_{\shift^{-1}\un{z}}\vpsi_a\rangle}{
\langle\vtheta_j, L^{[n-1]}_{\shift^{-1}\un{z}}\sum_{a}\vpsi_a\rangle}
= u_a + \frac{\langle\vtheta_j,
L^{[n-1]}_{\shift^{-1}\un{z}}\vxi_a\rangle}{
\langle\vtheta_j, L^{[n-1]}_{\shift^{-1}\un{z}}\vpsi\rangle},\qquad j=1,2.
\]
%
Define for any $n$ and $\un{z}$
%
\begin{equation}
\label{gammab} \gamma(n,\un{z}) = \frac{\langle\vtheta_{1},
L^{[n-1]}_{\shift
^{-1}\un{z}}\vpsi\rangle}{\langle\vtheta_2, L^{[n-1]}_{\shift^{-1}\un
{z}}\vpsi\rangle}
\end{equation}
and let
\[
R=\max\biggl\{\sup_{i}\frac{(\vtheta_{1})_i}{(\vtheta_{2})_i}, \sup
_{i}\frac{(\vtheta_{2})_i}{(\vtheta_{1})_i} \biggr\}. %
\]
Then we have
\begin{eqnarray*}
\bigl\langle\vtheta_{1}, L^{[n-1]}_{\shift^{-1}\un{z}}\vpsi\bigr
\rangle&=& \sum_{i=1}^k (
\vtheta_{1})_i \bigl( L^{[n-1]}_{\shift^{-1}\un{z}}\vpsi
\bigr)_i \leq R \sum_{i=1}^k
(\vtheta_2)_i \bigl( L^{[n-1]}_{\shift
^{-1}\un{z}}
\vpsi\bigr)_i
\\
&=& R \bigl\langle\vtheta_2, L^{[n-1]}_{\shift^{-1}\un{z}}\vpsi
\bigr\rangle
\end{eqnarray*}
and similarly
\[
\bigl\langle\vtheta_{2}, L^{[n-1]}_{\shift^{-1}\un{z}}\vpsi\bigr
\rangle\leq R \bigl\langle\vtheta_1, L^{[n-1]}_{\shift^{-1}\un{z}}
\vpsi\bigr\rangle. %
\]
In other words for any $n$ and $\un{z}$,
%
\begin{equation}
\label{bornegamma} R^{-1} \le\gamma(n,\un{z}) \le R.
\end{equation}
Then
\begin{eqnarray*}
&&\frac{\langle\vtheta_1, L^{[n-1]}_{\shift^{-1}\un{z}}\vpsi_a\rangle}{
\langle\vtheta_1, L^{[n-1]}_{\shift^{-1}\un{z}}\sum_{a}\vpsi_a\rangle}
- \frac{
\langle\vtheta_2, L^{[n-1]}_{\shift^{-1}\un{z}}\vpsi_a\rangle}{
\langle\vtheta_2, L^{[n-1]}_{\shift^{-1}\un{z}}\sum_{a}\vpsi_a\rangle
}
\\
&&\qquad = \bigl( \bigl\langle\vtheta_{1}, L^{[n-1]}_{\shift^{-1}\un{z}}
\vpsi\bigr\rangle\bigr)^{-1} \bigl\langle\vtheta_1-
\gamma(n,\un{z}) \vtheta_2, L^{[n-1]}_{\shift^{-1}\un{z}}
\vxi_a\bigr\rangle.
\end{eqnarray*}
Note that
\[
\bigl| \bigl\langle\vtheta_{1}, L^{[n-1]}_{\shift^{-1}\un{z}}\vpsi\bigr
\rangle\bigr| \geq\frac{1}{\sqrt{k}} \bigl\| L_{\shift^{-1}\un{z}}^{[n-1]}
\vpsi\bigr\|
\inf_{i} \bigl\{ (\vtheta_{1})_i \bigr
\}
\]
and
\begin{eqnarray*}
\bigl| \bigl\langle\vtheta_1-\gamma(n,\un{z})\vtheta_2,
L^{[n-1]}_{\shift^{-1}\un{z}}\vxi_a\bigr\rangle\bigr| &\leq& \bigl\|
\vtheta_1-\gamma(n,\un{z})\vtheta_2 \bigr\|\cdot\bigl\|
L_{\shift^{-1}\un{z}}^{[n-1]} \vxi_a \bigr\|
\\
&\leq& \bigl(\|\vtheta_1\|+ R \|\vtheta_2\|\bigr) \bigl\|
L_{\shift^{-1}\un{z}}^{[n-1]} \vxi_a \bigr\|.
\end{eqnarray*}
Then, using Oseledec's theorem the result follows.
\end{pf}

%
\begin{pf*}{Proof of Theorem \ref{upperbound}}
We observe that
\begin{eqnarray*}
&&
\sum_{b\in\A}\P\bigl(X_0=a,Z_{-n+1}^{-1}=z_{-n+1}^{-1},X_{-n}=b
\bigr) \rho(b)
\\
&&\qquad=\sum_{b\in\A}\sum_{x_{-n+1}^{-1}\in\A^{n-1}}
p(x_{-n+1}|b)\rho(b)q(z_{-1}|x_{-1})
p(a|x_{-1}) \\
&&\hspace*{99pt}{}\times\prod_{l=1}^{n-2}
q(z_{-l-1}|x_{-l-1})p(x_{-l}|x_{-l-1})
\\
&&\qquad= \bigl\langle\vtheta_\rho, L^{[n-1]}_{\shift^{-1}\un{z}}
\vpsi_a \bigr\rangle,
\end{eqnarray*}
where $\vtheta_\rho,\vpsi_a\in\R^k$ are given by
%
\begin{equation}\label{lepsi1}
(\vtheta_\rho)_i = \sum_{b\in\A}
\rho(b) p(i|b) \quad\mbox{and}\quad \vpsi_a = \ones_a.
\end{equation}
Therefore,
\[
\nu^{[n]}_{\un{z}, \rho}(a) = \frac{\langle\vtheta_\rho,
L^{[n-1]}_{\shift^{-1}\un{z}}\vpsi_a\rangle}{\langle\vtheta_\rho,
L^{[n-1]}_{\shift^{-1}\un{z}}\sum_{a}\vpsi_a\rangle},
\]
and the first statement of Theorem \ref{upperbound} follows from
Proposition \ref{propupperbound} since the conditions on $(\psi
_a)_{a\in\A}$, $\vtheta_{\rho}$
and $\vtheta_{\rho'}$ can be
immediately verified using hypothesis (H1).

The second part follows similarly by noting that
\[
\sum_{c\in\B}\P\bigl(X_0=a,Z_{-n+1}^{-1}=z_{-n+1}^{-1},Z_{-n}=c
\bigr) \eta(c) = \bigl\langle\vtheta_\eta, L^{[n-1]}_{\shift^{-1}\un{z}}
\vpsi_a \bigr\rangle,
\]
where
%
\begin{equation}
\label{lepsi2} (\vtheta_\eta)_i = \sum
_{c\in\B}\sum_{x\in\A}p(i|x) q(c|x)
\eta(c) \quad\mbox{and}\quad \vpsi_a = \ones_a.
\end{equation}
This finishes the proof of Theorem \ref{upperbound}.
\end{pf*}

Before proceeding with the proof of Theorem \ref{lowerbound} we will
prove a useful lemma.

%
\begin{lemma}\label{base1}
Let $(\vpsi_{a})_{a\in\A}$ be a basis of $\R^{k}$ such that
$\vpsi_{a}\ge0$ for all $a$.
Let $\tilde\Omega$ be a set of full $\mu$-measure where the
Oseledec's theorem holds. Then for any $\un{z}\in\bar
\Omega$, there exists a
symbol $a=a(\un{z})\in\A$ such that $\vxi_a\in V^{(2)}_{\un
{z}}\setminus
V^{(3)}_{\un{z}}$, where $\vxi_a$ is the unique vector in
$V^{(2)}_{\un{z}}$ satisfying
\[
\vpsi_a = u_a\sum_{b\in\A}
\vpsi_{b} + \vxi_a
\]
for some real number $u_{a}$.
\end{lemma}
%

\begin{pf}
Assume
$\vxi_a\in V^{(3)}_{\un{z}}$ for all $a$.
Then, as
$\operatorname{Codim}(V^{(3)}_{\un{z}})\geq2$, the set $\{\vpsi_a\}$
generates a sub-space of co-dimension 1. This contradicts
the fact that the set of vectors $\{\vpsi_a\dvtx a\in\A\}$
forms a basis of $\R^k$.
\end{pf}

The proof of Theorem \ref{lowerbound} will follow from the next
proposition.
%
\begin{proposition}\label{proplowerbound}
Let $(\vpsi_{a})_{a\in\A}$ be a basis of $\R^{k}$ satisfying
$\vpsi_{a}\ge0$ for any $a$.
Let $(\vtheta_{j})_{j\in\A}$ be another basis of $\R^{k}$ such that
$\vtheta_{j}>0$.\vadjust{\goodbreak}
Then for $\mu$-almost every $\un{z}$ there exist $a\in\A$ and two
indices $r, s\in\{1,\ldots,k\}$ such that
%
\begin{equation}\qquad
\limsup_{n\to\infty}\frac{1}{n}\log\biggl\llvert
\frac{\langle\vtheta_r, L^{[n-1]}_{\shift^{-1}\un{z}}\vpsi_a\rangle}{
\langle\vtheta_r, L^{[n-1]}_{\shift^{-1}\un{z}}\sum_{a}\vpsi_a\rangle}
- \frac{
\langle\vtheta_s, L^{[n-1]}_{\shift^{-1}\un{z}}\vpsi_a\rangle}{
\langle\vtheta_s, L^{[n-1]}_{\shift^{-1}\un{z}}\sum_{a}\vpsi_a\rangle}
\biggr\rrvert\ge\lambda_{2}-
\lambda_{1}.
\end{equation}
\end{proposition}
\begin{pf}
Let $\tilde\Omega$ be a set of full $\mu$-measure where the
Oseledec's theorem holds.
Applying Lemma \ref{base1}, for any $\un{z}\in\tilde\Omega$ we
find a symbol
$a=a(\un{z})\in\A$ such that
\[
\vpsi_a = u_a\sum_{b\in\A}
\vpsi_{b} + \vxi_a %
\]
with $\vxi_a\in
V^{(2)}_{\un{z}}\setminus V^{(3)}_{\un{z}}$.
Let
%
\begin{equation}
\tilde\xi_a (n,\un{z}) = \frac{L^{[n-1]}_{\shift^{-1}\un
{z}}\vxi_a}{\|L^{[n-1]}_{\shift^{-1}\un{z}}\vxi_a\|} \in
V^{(2)}_{\shift^{-n}\un{z}}.
\end{equation}
We now show that there exist $r$ and $s$ such that
\[
\limsup_{n\to\infty}{ \bigl|\bigl\langle\tilde\theta_r(n,
\un{z}) - \tilde\theta_s(n,\un{z}), \tilde\xi_a(n,
\un{z})\bigr\rangle\bigr|} > 0,
\]
where the vectors $\tilde\theta_j(n,\un{z})$ are defined by
\[
\tilde\theta_j(n,\un{z})=\frac{\langle\vtheta_{1},
L^{[n-1]}_{\shift^{-1}\un{z}}\sum_{a}\vpsi_{a}\rangle}{\langle\vtheta_j,
L^{[n-1]}_{\shift^{-1}\un{z}}\sum_{a}\vpsi_{a}\rangle} \vtheta_{j}.
\]
Assume this is not the case, namely that for any $r$ and $s$,
%
\begin{equation}
\label{lafalso} \lim_{n\to\infty}{ \bigl|\bigl\langle\tilde
\theta_r(n, \un{z}) -\tilde\theta_s(n,\un{z}), \tilde
\xi_a(n, \un{z})\bigr\rangle\bigr|} = 0.
\end{equation}
Choose for any $n$ (and fixed $\un{z}$) a normalized vector
$\vec{f}(n,\un{z})$ orthogonal
to $V^{(2)}_{\shift^{-n}\un{z}}$. Such a vector exists by Lemma
\ref{codim}. Note that for any $j$, $n$ and $\un{z}$, we have
\[
0 < {R^{-1} \min_{m}}\|\vec\theta_{m}\|
\leq\bigl\| \tilde\theta_j(n,\un{z})\bigr\| \leq {R \max_m}
\|\vec\theta_{m}\|, %
\]
where
\[
R=\sup_{j,m}\sup_{i}\frac{(\vtheta_{j})_i}{(\vtheta_{m})_i}.
\]
This implies that the vectors $(\vec{f}(n,\un{z}), \tilde\xi
_a(n,\un{z}),
\tilde\theta_1(n,\un{z}),\ldots, \tilde\theta_k(n,\un{z}))$ belong
to a compact subset of $\R^{k+2}$. Therefore, we can find a
subsequence $(n_{j})$ of integers such that
\begin{eqnarray*}
&&
\lim_{j\to\infty} \bigl(\vec{f}(n_{j},\un{z}), \tilde\xi
_a(n_{j},\un{z}), \tilde\theta_1(n_{j},
\un{z}),\ldots, \tilde\theta_k(n_{j},\un{z}) \bigr)
\\
&&\qquad= \bigl(\bar f(\un{z}), \bar\xi_a(\un{z}), \bar
\theta_1(\un{z}),\ldots, \bar\theta_k(\un{z}) \bigr).
\end{eqnarray*}
The vectors $\bar f(\un{z})$ and $\bar\xi_a(\un{z})$ have norm one,
and the
vectors $\bar\theta_j(\un{z})$ have nonnegative components and satisfy
\[
0 < {R^{-1} \min_m}\|\vec\theta_{m}\|
\leq\bigl\| \bar\theta_j(\un{z})\bigr\| \leq {R \max_m}\|
\vec\theta_{m}\|. %
\]
We have also for any $r$ and $s$
\[
\bigl\langle\bar\theta_r(\un{z}) -\bar\theta_s(
\un{z}), \bar\xi_a(\un{z})\bigr\rangle= 0. %
\]
We now show that the set of vectors $ \{\bar\theta_m(\un{z})
\}$
is a basis of $\R^{k}$. This follows from
\begin{eqnarray*}
\bigl|\det\bigl(\bar\theta_{1}(\un{z}),\ldots,\bar\theta_{k}(
\un{z}) \bigr) \bigr| &=& \bigl|\det(\vec\theta_{1},\ldots,\vec
\theta_{k} ) \bigr| \lim_{j\to\infty} \prod
_{m=1}^{k} \biggl\llvert\frac{\langle\vtheta_{1},
L^{[n_{j}-1]}_{\shift^{-1}\un{z}}\sum_{a}\vpsi_{a}\rangle}{\langle
\vtheta_m,
L^{[n_{j}-1]}_{\shift^{-1}\un{z}}\sum_{a}\vpsi_{a}\rangle} \biggr
\rrvert
\\
&\geq& R^{-k} \bigl|\det(\vec\theta_{1},\ldots,\vec
\theta_{k} ) \bigr| > 0.
\end{eqnarray*}
Let
\[
\zeta(n,\un{z})=\frac{1}{k}\sum_{m=1}^{k}
\tilde\theta_{m}(n,\un{z})
\]
and
\[
\bar\zeta(\un{z})=\lim_{j\to\infty} \zeta(n_{j},\un{z})=
\frac{1}{k}\sum_{m=1}^{k} \bar
\theta_{m}(\un{z}). %
\]
We now observe that since all the components of the vector
$\zeta(n,\un{z})$ are strictly positive, and since by Lemma \ref{signos}
any vector in $ V^{(2)}_{\shift^{-n}\un{z}}$ has two components of
opposite sign, we get
\begin{eqnarray*}
\frac{|\langle\vec{f}(n,\un{z}), \zeta(n,\un{z})\rangle| } {
\|\zeta(n,\un{z})\|} &=& \inf_{\vec y\in V^{(2)}_{\shift^{-n}\un{z}}} \biggl\|
\frac{\zeta(n,\un{z})}{\|\zeta(n,\un{z})\|}-\vec y
\biggr\|
\\
&\geq&\min_{i} \biggl\{\frac{(\zeta(n,\un{z}))_{i}}{\|\zeta(n,\un
{z})\|} \biggr\} \geq
\frac{1}{k R^{2}} \frac{\min_{m,i}(\vec\theta_{m})_i}{\max_{m,i}(\vec
\theta_{m})_i} > 0.
\end{eqnarray*}
Taking the limit we get
\[
\frac{|\langle\bar f(\un{z}), \bar\zeta(\un{z})\rangle|} {
\|\bar\zeta(\un{z})\|} \geq\frac{1}{k R^{2}} \frac{\min_{m,i}(\vec
\theta_{m})_i}{\max_{m,i}(\vec\theta_{m})_i} > 0. %
\]
We now define the orthogonal projection $\mathcal{P}$ on the orthogonal
$\bar f^{\perp}$
of $\bar f$ parallel to~$\bar\zeta$, namely for any vector $v$
\[
\mathcal Pv=v-\bar\zeta\frac{\langle\bar f, v\rangle} {
\langle\bar f, \bar\zeta\rangle}. %
\]
We claim that the vectors $ (\mathcal{P} (\bar
\theta_{m}(\un{z})-\bar
\theta_{m+1}(\un{z}) ) )_{m=1,\ldots,k-1}$ form a basis of
$\bar
f^{\perp}$. Indeed, if this is not true, there exist real numbers
$\alpha_{1},\ldots,\alpha_{k-1}$, with at least one nonzero, such that
\[
\sum_{m=1}^{k-1}\alpha_{m}
\mathcal{P} \bigl(\bar\theta_{m}(\un{z})-\bar\theta_{m+1}(
\un{z}) \bigr)=0. %
\]
In other words, there exists a number $\alpha$ such that
\[
\sum_{m=1}^{k-1}\alpha_{m}
\bigl(\bar\theta_{m}(\un{z})-\bar\theta_{m+1}(\un{z})
\bigr)=\alpha\bar\zeta. %
\]
But this is impossible since the vectors $ (\bar
\theta_{m}(\un{z})-\bar
\theta_{m+1}(\un{z}) )_{m=1,\ldots,k-1}$ and $\bar\zeta(\un{z})$
form a basis of $\R^{k}$. Since
\[
\bigl\langle\bar f(\un{z}), \bar\xi_{a}(\un{z}) \bigr\rangle= \lim
_{j\to\infty} \bigl\langle f(n_{j},\un{z}), \tilde
\xi_{a}(n_{j},\un{z}) \bigr\rangle= 0, %
\]
we obtain that the normalized vector $\bar\xi_{a}(\un{z})$ would be
orthogonal to the basis $ (\mathcal{P} (\bar
\theta_{m}(\un{z})-\bar
\theta_{m+1}(\un{z}) ) )_{m=1,\ldots,k-1}$ of $\bar
f^{\perp}$
which is a contradiction with (\ref{lafalso}). In other words, there
exists $a=a(\un{z})$, $r=r(\un{z})$ and $s=s(\un{z})$ such that
\[
\limsup_{n\to\infty}{ \bigl|\bigl\langle\tilde\theta_r(n,
\un{z}) - \tilde\theta_s(n,\un{z}), \tilde\xi_a(n,
\un{z})\bigr\rangle\bigr|} > 0.
\]
By Schwarz's inequality we have
\begin{eqnarray*}
&& \biggl\llvert\frac{\langle\vtheta_r, L^{[n-1]}_{\shift^{-1}\un
{z}}\vpsi_a\rangle}{
\langle\vtheta_r, L^{[n-1]}_{\shift^{-1}\un{z}}\sum_{a}\vpsi_a\rangle}
- \frac{
\langle\vtheta_s, L^{[n-1]}_{\shift^{-1}\un{z}}\vpsi_a\rangle}{
\langle\vtheta_s, L^{[n-1]}_{\shift^{-1}\un{z}}\sum_{a}\vpsi_a\rangle}
\biggr\rrvert
\\
&&\qquad= \frac{\| L^{[n-1]}_{\shift^{-1}\un{z}}\vxi_a\|} {
|\langle\vtheta_{1},
L^{[n-1]}_{\shift^{-1}\un{z}}\sum_{a}\vpsi_a\rangle|} \bigl|\bigl\langle
\tilde\theta_r(n,\un{z})-\tilde
\theta_s(n,\un{z}), \tilde\xi_{a}(n,\un{z})\bigr\rangle\bigr|
\\
&&\qquad\geq\frac{\| L^{[n-1]}_{\shift^{-1}\un{z}}\vxi_a\|} {
\|\vtheta_{1}\| \| L^{[n-1]}_{\shift^{-1}\un{z}}\sum_{a}\vpsi
_a\|} \bigl|\bigl\langle\tilde\theta_r(n,\un{z})-
\tilde\theta_s(n,\un{z}), \tilde\xi_{a}(n,\un{z})\bigr
\rangle\bigr|.
\end{eqnarray*}
Therefore, for this choice of $a(\un{z})\in\A$, $r(\un{z})$ and
$s(\un{z})$, we have
\[
\limsup_{n\to\infty}\frac{1}{n}\log\biggl\llvert
\frac{\langle\vtheta_r, L^{[n-1]}_{\shift^{-1}\un{z}}\vpsi_a\rangle}{
\langle\vtheta_r, L^{[n-1]}_{\shift^{-1}\un{z}}\sum_{a}\vpsi_a\rangle
}-\frac{
\langle\vtheta_s, L^{[n-1]}_{\shift^{-1}\un{z}}\vpsi_a\rangle}{
\langle\vtheta_s, L^{[n-1]}_{\shift^{-1}\un{z}}\sum_{a}\vpsi_a\rangle}
\biggr\rrvert\ge\lambda_{2}-
\lambda_{1}. %
\]
\upqed
\end{pf}

%
\begin{pf*}{Proof of Theorem \ref{lowerbound}}
As in the proof of Theorem \ref{upperbound} we take for any $a\in\A
$ the vector $\vpsi_{a}=\ones_a$. We also take
for any $b\in\B$ the vector $\vtheta_b$ in $\R^k$ as the vector
$\vtheta_{\rho}$ in (\ref{lepsi1}) with $\rho$ the Dirac measure
concentrated on $b$, that is, $(\vtheta_b)_i = p(i|b)$.
Under (H2), these definitions verify the hypotheses of Proposition \ref
{proplowerbound} and the first part of Theorem \ref{lowerbound}
follows.

For the second part, we define for any $c\in\B$ the vector $(\vtheta
_c)$ as the vector $(\vtheta_{\eta})$
in (\ref{lepsi2}) with $\eta$ the Dirac measure concentrated on $c$,
that is,
\[
(\vtheta_c)_i =\sum_{x\in\A}
p(i|x) q(c|x). %
\]
It follows from (H2) and (H3) that
we can choose $c_{1},\ldots,c_{k}$ such that
$(\vtheta_{c_{j}})_{1\leq j\leq k}$ is a basis of $\R^k$. The result
follows again
from Proposition \ref{proplowerbound}.
\end{pf*}

%
\begin{pf*}{Proof of Corollary \ref{cormain}}
The upper bound follows by noting that for all $\un{z}\in\B^\Z$, for
any $e\in\B$ and for any measures $\rho$ and $\rho'$ on $\A$, we have
%
\begin{equation}
\label{equiv} \tilde\nu^{[n]}_{\un{z}, \rho}(e)-\tilde
\nu^{[n]}_{\un{z},
\rho'}(e)= \sum_{x_{0}\in\A}
q(e|x_{0}) \bigl(\nu^{[n]}_{\un{z},
\rho}(x_{0})
- \nu^{[n]}_{\un{z}, \rho'}(x_{0}) \bigr),
\end{equation}
and applying Theorem \ref{upperbound}, the second upper bound follows
similarly.

We now prove that the upper bound is reached for almost all
$\un{z}\in\B^\Z$.

By (H3), as $\operatorname{rank}(q) = k$ there exists symbols
$e_1,\ldots,e_k\in\B$ such that the matrix
$M\in\R^{k\times k}$ with elements $M_{i,j} = q(e_i|j)$ is invertible.
For $b, c\in\A$, denote by
$U^{[n]}_{b,c,\un{z}}$ and $V^{[n]}_{b,c,\un{z}}$ the vectors in $\R
^k$ with elements
$(U^{[n]}_{b,c,\un{z}})_i = \tilde\nu^{[n]}_{\un{z}, b}(e_{i})
-\tilde\nu^{[n]}_{\un{z}, c}(e_{i})$ and
$(V^{[n]}_{b,c,\un{z}})_i = \nu^{[n]}_{\un{z}, b}(i)
-\nu^{[n]}_{\un{z}, c}(i)$. By (\ref{equiv}) we have
\[
U^{[n]}_{b,c,\un{z}} = M V^{[n]}_{b,c,\un{z}}
\]
and as $M$ is invertible
\[
V^{[n]}_{b,c,\un{z}} = M^{-1} U^{[n]}_{b,c,\un{z}}.
\]
Then, for all $a, b, c\in\A$
\begin{eqnarray*}
\bigl| \nu^{[n]}_{\un{z}, b}(a) -\nu^{[n]}_{\un{z}, c}(a) \bigr|
&\leq&\bigl\|V^{[n]}_{b,c,\un{z}} \bigr\| \leq\bigl\| M^{-1}\bigr\| \bigl\|
U^{[n]}_{b,c,\un{z}}\bigr\| %
\\
&\leq&\sqrt{k} \bigl\| M^{-1}\bigr\| \max_{i} \bigl\{ \bigl| \tilde
\nu^{[n]}_{\un{z}, b}(e_{i}) -\tilde\nu^{[n]}_{\un{z}, c}(e_{i})
\bigr| \bigr\}. %
\end{eqnarray*}
Applying the logarithm on both sides, dividing by $n$ an taking limits,
we have that for all $\un{z}$ on a set of positive measure, for all
$a, b, c\in\A$, and for all $e\in\B$
\[
\limsup_{n\to\infty}\frac{1}{n} \log\bigl| \nu^{[n]}_{\un{z}, b}(a)
-\nu^{[n]}_{\un{z}, c}(a) \bigr|\le\max_{e\in\B} \limsup
_{n\to\infty}\frac{1}{n} \log\bigl| \tilde\nu^{[n]}_{\un{z}, b}(e)
-\tilde\nu^{[n]}_{\un{z}, c}(e) \bigr|, %
\]
and the third part of Corollary \ref{cormain} follows from
Theorem \ref{lowerbound}.
The last part follows by the same arguments.
\end{pf*}

%
\begin{pf*}{Proof of Proposition \ref{diff}}
It is well known that the sequence of Lyapunov exponents
satisfy
\[
\lambda_1 + m_2\lambda_2+\cdots+m_s\lambda_s = \E_\mu\bigl[ {\log}|{\det
L_{\un{z}}}| \bigr],
\]
where the numbers $m_i$ denote the multiplicity of $\lambda_i$, namely
$\operatorname{dim}(V^{(j)}_{\un{z}})=m_{j}+\cdots+m_s $; see
\citet{ledrappier,katok1995}. In particular, $1+m_2+\cdots+m_s =k$.
Let $E = \E_\mu[ {\log}|{\det L_{\un{z}}}| ]$. Then we have
\[
E \leq\lambda_1 + (k-1)\lambda_2
\]
and
\[
\lambda_2 -\lambda_1 \geq\frac{E}{k-1} -
\frac{k}{k-1}\lambda_1.
\]
Note that by Lemma \ref{signos}, for almost all $\un{z}$ we have
\[
\lambda_1 = \lim_{n\to\infty}\frac1n \log
\bigl\|L_{\un{z}}^{[n]}\ones\bigr\| \leq\lim_{n\to\infty}\frac1n
\log\|\ones\| = 0.
\]
Moreover,
\[
\det L_{\un{z}} = \Biggl( \prod_{i=1}^k
q(z_0|i) \Biggr) \det(p).
\]
Therefore,
\begin{eqnarray*}
\lambda_2 -\lambda_1 &\geq& \frac{1}{k-1}\log\bigl|
\det(p)\bigr| + \frac{1}{k-1} \sum_{i=1}^k
\E_\mu\bigl[\log q( \cdot|i) \bigr]
\\
&\geq& \frac{1}{k-1}\log\bigl|\det(p)\bigr| - \frac{k}{k-1}\log\Gamma.
\end{eqnarray*}
\upqed
\end{pf*}

Before proving Theorem \ref{tv}, we prove a lemma in linear algebra
which will be useful for the
proof. This lemma is probably well known but we could not find a
reference. We give the proof here for the convenience of the reader.

\begin{lemma}\label{alglin}
Let $e_{1},\ldots,e_{k}$ be a basis of $\R^{k}$ and assume that all
the vectors $\|e_{j}\|$ have norm one. Then, for any vector $v\in\R^{k}$
of norm one, we have
\[
\sup_{1\le j\le k} \bigl|\langle v, e_{j}\rangle\bigr| \ge
\frac{1}{ k^{3/2} (k-1)! \det A}, %
\]
where $A$ is a matrix mapping the basis $(e_{i})$ to
an orthonormal basis.
\end{lemma}
\begin{pf}
Let
\[
\delta=\sup_{1\le j\le k} \bigl|\langle v, e_{j}\rangle\bigr|.\vadjust{\goodbreak}
\]
Let $(f_{j})$ be an orthonormal basis of $\R^{k}$. Let $A$ be the
matrix mapping the basis $(e_{j})$ to the basis $(f_{j})$, namely
$A e_{j}=f_{j}$ for $1\le j\le k$. We have
\[
\langle v, e_{j}\rangle= \bigl\langle v, A^{-1}f_{j}
\bigr\rangle= \bigl\langle{A^{-1}}^{\mathrm{t}} v, f_{j}
\bigr\rangle. %
\]
Therefore,
\[
\bigl\| {A^{-1}}^{\mathrm{t}} v \bigr\|\le\delta\sqrt k
\]
and
\[
1=\|v\|= \bigl\|{A}^{\mathrm{t}}{A^{-1}}^{\mathrm{t}}v \bigr\| \le
\bigl\|{A}^{\mathrm{t}} \bigr\|\delta\sqrt k. %
\]
On the other hand, since $(e_{j})_{\ell}=A^{-1}_{j,\ell}$ we have
$|A^{-1}_{j,\ell}|\le1$ for $1\le j\le k$ and $1\le\ell\le k$. This
implies by the well-known formula expressing the elements of an
inverse matrix in terms of minors and determinant that
for any $1\le j\le k$ and $1\le\ell\le k$
\[
|A_{j,\ell} |\le\frac{(k-1)!}{ |{\det A^{-1}}
|}=(k-1)! \det A. %
\]
Therefore $\|A^{\mathrm{t}}
\|=\|A\|\le k (k-1)! \det A$ (the Hilbert--Schmidt norm), and
we finally get
\[
\delta\ge\frac{1}{ k^{3/2} (k-1)! |{\det A }|}.
\]
\upqed
\end{pf}

Theorem \ref{tv} will be a consequence of the following proposition.

\begin{proposition}\label{proptv}
Assume hypotheses\vspace*{1pt} \textup{(H1)--(H2)} hold.
Let $(\vpsi_{a})_{a\in\A}$ be a basis of $\R^{k}$ satisfying\vspace*{2pt}
$\vpsi_{a}\ge0$ for any $a$.
Let $\vtheta_{1}>0$ and $\vtheta_{2}>0$ be two vectors in $\R^{k}$
with $\vtheta_{1}$ independent of $\vtheta_{2}$.
Then
\begin{eqnarray*}
&&\liminf_{n\to\infty}\frac{1}{n}\log\sum
_{a\in\A} \biggl\llvert\frac{\langle\vtheta_1, L^{[n-1]}_{\shift
^{-1}\un{z}}\vpsi_a\rangle}{
\langle\vtheta_1, L^{[n-1]}_{\shift^{-1}\un{z}}\sum_{a}\vpsi_a\rangle}
- \frac{
\langle\vtheta_2, L^{[n-1]}_{\shift^{-1}\un{z}}\vpsi_a\rangle}{
\langle\vtheta_2, L^{[n-1]}_{\shift^{-1}\un{z}}\sum_{a}\vpsi_a\rangle}
\biggr\rrvert
\\
&&\qquad \ge\lambda_{s}-\lambda_{1}+\sum
_{i=2}^{s}m_{i} (\lambda_{i}-
\lambda_{2} ) \ge2 \bigl({\log}|{\det p}|-k\log\Gamma\bigr).
\end{eqnarray*}
\end{proposition}
\begin{pf}
As in the proof of Proposition \ref{propupperbound} let
\[
\vpsi=\sum_{a\in\A}\vpsi_{a} \quad\mbox{and}\quad
\gamma(n,\un{z}) = \frac{\langle\vtheta_{1},
L^{[n-1]}_{\shift^{-1}\un{z}}\vpsi\rangle}{\langle\vtheta_2,
L^{[n-1]}_{\shift^{-1}\un{z}}\vpsi\rangle}. %
\]
We also define the vector $\vec{\eta}(n,\un{z})=\vtheta_1 -\gamma
(n,\un{z}) \vtheta_2$ that satisfies
\[
\bigl\langle\vec{\eta}(n,\un{z}), L^{[n-1]}_{\shift^{-1}\un{z}}\vpsi
\bigr
\rangle=0.
\]

For any $a\in\A$, we denote as before by $\vxi_{a}$ the unique vector
in $ V_{\un{z}}^{(2)}$ such that
$\vpsi_{a}=u_{a} \vpsi+\vxi_{a}$, with $u_{a}$ a real number.

Let
%
\begin{equation}
\label{xit} \tilde\xi_a (n,\un{z}) = \frac{L^{[n-1]}_{\shift^{-1}\un
{z}}\vxi_a}{
\|L^{[n-1]}_{\shift^{-1}\un{z}}\vxi_a\|} \in
V^{(2)}_{\shift
^{-n}\un{z}}.
\end{equation}

Let $a_{2},\ldots,a_{k}$ be any given collection of $k-1$ different
elements of $\A$. Then, for any $\un{z}\in\Omega$, the set of
vectors $\vpsi,\vxi_{a_{2}},\ldots,\vxi_{a_{k}}$ form a basis of
$\R^{k}$ (since $\vpsi\notin V_{\un{z}}^{(2)}$ by Lemma
\ref{signos}).

By the hypotheses (H1)--(H2) we have that for any $\un{z}$, $\det
(L_{\un{z}})\neq0$, and therefore
for any integer $n$, $\det(L^{[n]}_{\un{z}})\neq0$.
This implies that the collection of vectors
%
\begin{equation}
\label{setvectors} \biggl\{ \frac{L^{[n-1]}_{\shift^{-1}\un{z}}\vpsi}{
\|L^{[n-1]}_{\shift^{-1}\un{z}}\vpsi\|}, \tilde
\xi_{a_{2}}(n,\un{z}),\ldots,\tilde\xi_{a_{k}}(n,\un{z}) \biggr\}
\end{equation}
is also a basis of $\R^{k}$ and
\begin{eqnarray*}
&&\sum_{j=2}^{k} \biggl\llvert
\frac{\langle\vtheta_1, L^{[n-1]}_{\shift^{-1}\un{z}}\vpsi
_{a_{j}}\rangle}{
\langle\vtheta_1, L^{[n-1]}_{\shift^{-1}\un{z}}\vpsi\rangle} - \frac{
\langle\vtheta_2, L^{[n-1]}_{\shift^{-1}\un{z}}\vpsi_{a_{j}}\rangle}{
\langle\vtheta_2, L^{[n-1]}_{\shift^{-1}\un{z}}\vpsi\rangle} \biggr
\rrvert
\\
&&\qquad \geq\sum_{j=2}^{k} \frac{\| L^{[n-1]}_{\shift^{-1}\un{z}}\vxi
_{a_{j}}\|} {
\|\vtheta_{1}\| \| L^{[n-1]}_{\shift^{-1}\un{z}}\vpsi\|}
\bigl|\bigl\langle\vtheta_1-\gamma(n,\un{z}) \vtheta_2,
\tilde\xi_{a_{j}}(n,\un{z})\bigr\rangle\bigr|
\\
&&\qquad= \sum_{j=2}^{k} \frac{\| L^{[n-1]}_{\shift^{-1}\un{z}}\vxi_{a_{j}}\|} {
\|\vtheta_{1}\| \| L^{[n-1]}_{\shift^{-1}\un{z}}\vpsi\|} \bigl|
\bigl\langle\vec\eta(n,\un{z}), \tilde\xi_{a_{j}}(n,\un{z})\bigr\rangle\bigr|.
\end{eqnarray*}
We now apply Lemma \ref{alglin}
and obtain
\begin{eqnarray*}
&&\sum_{j=2}^{k} \biggl\llvert
\frac{\langle\vtheta_1, L^{[n-1]}_{\shift^{-1}\un{z}}\vpsi
_{a_{j}}\rangle}{
\langle\vtheta_1, L^{[n-1]}_{\shift^{-1}\un{z}}\vpsi\rangle} - \frac{
\langle\vtheta_2, L^{[n-1]}_{\shift^{-1}\un{z}}\vpsi_{a_{j}}\rangle}{
\langle\vtheta_2, L^{[n-1]}_{\shift^{-1}\un{z}}\vpsi\rangle} \biggr
\rrvert
\\
&&\qquad \geq\frac{\inf_{a\in\A}\| L^{[n-1]}_{\shift^{-1}\un{z}}\vxi_{a}\|} {
\|\vtheta_{1}\| \| L^{[n-1]}_{\shift^{-1}\un{z}}\vpsi\|} \frac{ \|\vec
\eta(n,\un{z}) \|}{ k^{3/2} (k-1)!
\llvert
{\det M}
\rrvert},
\end{eqnarray*}
where $M$ is the matrix formed by the vectors in (\ref{setvectors}).
We now observe that
\[
\bigl\|\vec\eta(n,\un{z}) \bigr\|= \bigl\|\vtheta_{1}- \gamma(n,\un{z})
\vtheta_{2} \bigr\|\ge\inf_{\alpha} \| \vtheta_{1}-
\alpha\vtheta_{2} \|=\sqrt{ \|\vtheta_{1} \|
^{2}-\frac{
\langle\vtheta_{1}, \vtheta_{2}\rangle^{2} } {
\|\vtheta_{2} \|^{2}}} >0, %
\]
since $\vtheta_{1}$ is independent of $\vtheta_{2}$.
We also have
\begin{eqnarray*}
\det M & = & \frac{\det
(L^{[n-1]}_{\shift^{-1}\un{z}}\vpsi,
L^{[n-1]}_{\shift^{-1}\un{z}}\vxi_{a_{2}},\ldots,
L^{[n-1]}_{\shift^{-1}\un{z}}\vxi_{a_{k}} )} {
\|L^{[n-1]}_{\shift^{-1}\un{z}}\vpsi\| \prod_{j=2}^{k}
\|L^{[n-1]}_{\shift^{-1}\un{z}}\vec
\xi_{a_{j}}\|}
\\
&=& \frac{\det(L^{[n-1]}_{\shift^{-1}\un{z}} )} {
{\|L^{[n-1]}_{\shift^{-1}\un{z}}\vpsi\| \prod_{j=2}^{k}
\|L^{[n-1]}_{\shift^{-1}\un{z}}\vec
\xi_{a_{j}}\|}} \det(\vpsi, \vxi_{a_{2}},\ldots,
\vxi_{a_{k}} ).
\end{eqnarray*}
We observe that, for any $a\in A$, by Oseledec's theorem we have
$\mu$-almost surely
\[
\liminf_{n\to\infty} \frac{1}{n}\log\bigl\|L^{[n-1]}_{\shift^{-1}\un
{z}}
\vec\xi_{a}\bigr\| \ge\lambda_{s}. %
\]
Therefore [see, e.g., \citet{ledrappier,katok1995}], since
\[
\lim_{n\to\infty} \frac{1}{n}\log\bigl|\det\bigl(L^{[n-1]}_{\shift^{-1}\un{z}}
\bigr) \bigr| = \sum_{j=1}^{s}m_{j}
\lambda_{j}, %
\]
we get
\begin{eqnarray*}
&&\liminf_{n\to\infty} \frac{1}{n}\log\sum
_{a\in\A} \biggl\llvert\frac{\langle\vtheta_1, L^{[n-1]}_{\shift
^{-1}\un{z}}\vpsi_{a}\rangle}{
\langle\vtheta_1, L^{[n-1]}_{\shift^{-1}\un{z}}\vpsi\rangle} - \frac{
\langle\vtheta_2, L^{[n-1]}_{\shift^{-1}\un{z}}\vpsi_{a}\rangle}{
\langle\vtheta_2, L^{[n-1]}_{\shift^{-1}\un{z}}\vpsi\rangle}
\biggr\rrvert
\\
&&\qquad\ge\liminf_{n\to\infty} \frac{1}{n}\log\sum
_{j=2}^{k} \biggl\llvert\frac{\langle\vtheta_1, L^{[n-1]}_{\shift
^{-1}\un{z}}\vpsi
_{a_{j}}\rangle}{
\langle\vtheta_1, L^{[n-1]}_{\shift^{-1}\un{z}}\vpsi\rangle} -
\frac{
\langle\vtheta_2, L^{[n-1]}_{\shift^{-1}\un{z}}\vpsi_{a_{j}}\rangle}{
\langle\vtheta_2, L^{[n-1]}_{\shift^{-1}\un{z}}\vpsi\rangle} \biggr
\rrvert
\\
&&\qquad \ge\lambda_{s}-\lambda_{1}+\sum
_{j=1}^{s}m_{j}\lambda_{j}
-(k-1)\lambda_{2}-\lambda_{1}
\\
&&\qquad = \lambda_{s}-\lambda_{1} +\sum
_{j=2}^{s}m_{j} (\lambda_{j}-
\lambda_{2} ),
\end{eqnarray*}
which is the first part of the lower bound. We also have
\[
\bigl|\det\bigl(L^{[n-1]}_{\shift^{-1}\un{z}} \bigr) \bigr| = |{\det p}|^{n-1}
\prod_{j=1}^{n} \Biggl(\prod
_{\ell=1}^{k} q (z_{-j},l ) \Biggr) \ge|{\det
p}|^{n-1} \Gamma^{-nk}. %
\]
Therefore
\[
\sum_{j=1}^{s}m_{j}
\lambda_{j} \ge{\log}|{\det p}|-k\log\Gamma. %
\]
Since all the Lyapunov exponents are nonpositive, we get
\begin{eqnarray*}
\lambda_{s}-\lambda_{1} +\sum_{j=2}^{s}m_{j}
(\lambda_{j}-\lambda_{2} ) &\ge& \lambda_{s}+\sum
_{j=2}^{s}m_{j}
\lambda_{j}
\\
&\ge& 2\sum_{j=1}^{s}m_{j}
\lambda_{j} \\
&\ge& {2\log}|{\det p}|-2k\log\Gamma.
\end{eqnarray*}
\upqed
\end{pf}

%
\begin{pf*}{Proof of Theorem \ref{tv}}
The result follows immediately from Proposition~\ref{proptv} using
the same choices for the vectors $(\vpsi_{a})_{a\in\A}$ and
$(\vtheta_b)_{b\in\B}$ as in the proof
of Theorem \ref{lowerbound}.
\end{pf*}

\section{Perturbed processes over a binary alphabet}\label{binary}

Consider the Mar\-kov chain $(X_t)_{t\in\Z}$ over the alphabet $\A=\{
0,1\}$
with matrix of transition probabilities given by
\[
P = \pmatrix{ p_0 & 1-p_0
\cr p_1 & 1-p_1},
\]
where we assume $p_0\neq p_1$ and
\[
0<\beta=\min\{p_{0}, p_{1}, 1-p_{0},1-p_{1}
\}. %
\]
The quantities $p(j|i)$ are given by
$p(j|i)=P_{i,j}$.

Consider also the process $(Z_t)_{t\in\Z}$ over the alphabet $\B=\{
0,1\}$ with output matrix
$q_{\varepsilon}(j|i)= \P(Z_0=j\given
X_0=i)=(1-\varepsilon) \mathbh{1}_{\{i=j\}}+\varepsilon
\mathbh{1}_{\{i\neq j\}}$. From now on we will
assume $\varepsilon\in(0,1)\setminus\{1/2\}$. Then, as $z_0\in\{0,1\}$,
\[
L_{\un{z},\varepsilon} = \pmatrix{ \bigl[z_0\varepsilon+(1-z_0)
(1-\varepsilon) \bigr] p_0 & \bigl[z_0\varepsilon
+(1-z_0) (1-\varepsilon) \bigr](1-p_0)
\vspace*{2pt}\cr
\bigl[z_0(1-\varepsilon) +(1-z_0)\varepsilon
\bigr]p_1 & \bigl[z_0(1-\varepsilon) +(1-z_0)
\varepsilon\bigr](1-p_1)}.
\]
We have the
following equality:
\[
\lambda_1+\lambda_2 = \E_\mu\bigl[ {\log}|{\det
L_{ \cdot,\varepsilon}}| \bigr];
\]
see, for example, \citet{ledrappier} or \citet{katok1995} for a proof. Therefore
%
\begin{eqnarray}\label{ledet}
\lambda_1+\lambda_2 & = & \P(Z_0=0) \log
\bigl( (1-\varepsilon)\varepsilon\bigl|p_0(1-p_1)-p_1(1-p_0)\bigr|
\bigr)
\nonumber
\\
&&{}+ \P(Z_0=1) \log\bigl( \varepsilon(1-\varepsilon)\bigl|p_0(1-p_1)-p_1(1-p_0)\bigr|
\bigr)
\\
& = & \log\varepsilon+ \log(1-\varepsilon) + \log\bigl|\det(P)\bigr|.
\nonumber
\end{eqnarray}
From the above expression for $L_{\un{z},\varepsilon}$ we have
\[
L_{\un{z},\varepsilon} = M_{z_0} + \varepsilon A_{z_0},
\]
where
\[
M_{z_0} = \pmatrix{ (1-z_0) p_0 &
(1-z_0) (1-p_0)
\cr
z_0 p_1 &
z_0 (1-p_1)}
\]
and
\[
A_{z_0} = (2z_0-1) \pmatrix{ p_0 &
(1-p_0)
\cr
-p_1 & -(1-p_1)}.
\]
For $z_{0}\in\{0,1\}$ define the vectors
\[
\vec{e}_{z_{0}} = \pmatrix{ 1-z_{0}
\cr
z_{0}}
\quad\mbox{and}\quad \vec{f}_{z_{0}} = \pmatrix{ z_{0}
\cr
1-z_{0}}.
\]
These vectors have norm 1 and satisfy
\[
M_{z_{0}} \vec e_{z_{1}} =\rho_{0}(z_{0},z_{1})
\vec e_{z_{0}} \quad\mbox{and}\quad M_{z_{0}}^{t} \vec
f_{z_{0}}=\vec0, %
\]
where
\[
\rho_{0}(z_{0},z_{1})=(1-z_{1})
\bigl(p_{0}(1-z_{0})+p_1z_0 \bigr)
+z_1 \bigl( (1-p_{0}) (1-z_{0})+(1-p_{1})z_0
\bigr), %
\]
since $z_{0}$ and $z_{1}$ equal zero or one.

We recall that a distance $d$ can be defined on $\Omega$ as follows.
For $\un{z}$ and $\un{z}'$ in $\Omega$, let
\[
\tilde d \bigl(\un{z},\un{z}' \bigr) = \inf\bigl\{|i|,
z_{i}\neq z'_{i} \bigr\}. %
\]
Then
\[
d \bigl(\un{z},\un{z}' \bigr) = e^{-\tilde d(\un{z},\un{z}')}. %
\]
We refer to \citet{bowen} for details, in particular $\Omega$ equipped
with this distance is a compact metric space. We now
prove the following result.
%
\begin{lemma}\label{projectif}
There exist two constants $\varepsilon_{0}>0$ and $D>0$
and two continuous functions
$\rho(\varepsilon,\un{z})$ and $h(\varepsilon,\un{z})$
such that for any
$\varepsilon\in[0,\varepsilon_{0}]$,
the vectors
\[
\vec{g}(\varepsilon,\un{z}) = \vec{e}_{z_1} + \varepsilon h(\varepsilon,\un{z})
\vec{f}_{z_1}
\]
satisfy
\[
L_{\un{z},\varepsilon} \vec{g}(\varepsilon,\un{z}) = \rho(\varepsilon,\un{z})
\vec{g}
\bigl(\varepsilon,\shift^{-1}\un{z} \bigr).
\]
Moreover, there is a constant $U>1$ such that for any
$\varepsilon\in[0,\varepsilon_{0}]$, any $n$ and any $\un{z}\in\Omega$,
\[
\bigl\|\vec{g}(\varepsilon,\un{z})-\vec e_{z_{1}}\bigr\| \leq U \varepsilon,\qquad \bigl|\rho(
\varepsilon,\un{z})-\langle M_{z_1}\vec{e}_{z_2},
\vec{e}_{z_1}\rangle\bigr| \leq U \varepsilon%
\]
and
\[
U^{-1} \varepsilon\vec1 \leq\vec{g}(\varepsilon,\un{z}) \leq U \vec1.
\]
\end{lemma}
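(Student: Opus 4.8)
The plan is to construct the eigenvector $\vec g(\epsilon,\un z)$ by a fixed-point argument in the function $h(\epsilon,\un z)$. Writing $L_{\un z,\epsilon}=M_{z_0}+\epsilon A_{z_0}$ and substituting the ansatz $\vec g(\epsilon,\un z)=\vec e_{z_1}+\epsilon h(\epsilon,\un z)\vec f_{z_1}$ into the eigenvalue relation $L_{\un z,\epsilon}\vec g(\epsilon,\un z)=\rho(\epsilon,\un z)\vec g(\epsilon,\shift^{-1}\un z)$, one should first verify the key algebraic identity $M_{z_0}\vec e_{z_1}=c\,\vec e_{z_0}$ for an explicit scalar $c$ (this is the statement that $\vec e_b$ is, up to scaling, the image under $M_b$ of anything, since $M_b$ has rank one with column space spanned by $\vec e_b$). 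Given this, at $\epsilon=0$ the relation holds with $\rho(0,\un z)=\langle M_{z_1}\vec e_{z_2},\vec e_{z_1}\rangle/\|\vec e_{z_1}\|^2$ and $\vec g(0,\un z)=\vec e_{z_1}$. Expanding to first order, the $O(\epsilon)$ terms produce a relation that determines $h(\epsilon,\un z)$ implicitly: projecting the eigenvalue equation onto the one-dimensional complement of $\range(M_{z_0})$ (equivalently, onto $\vec e_{z_0}^{\,\perp}$, or using a left eigenvector of $M_{z_0}$) kills the $M_{z_0}\vec g$ term and leaves an equation of the form $h(\epsilon,\un z)=\Phi(\epsilon,\un z,h(\epsilon,\shift^{-1}\un z),h(\epsilon,\shift^{-2}\un z))$ — actually, because $\rho$ also depends on $h$ at shifted points, the cleanest route is to set up a fixed-point map on the Banach space $C(\Omega\times[0,\epsilon_0])$ (or $C(\Omega)$ for each fixed $\epsilon$) sending $h\mapsto \Phi[h]$.

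Concretely, I would define $\rho(\epsilon,\un z)$ in terms of a prospective $\vec g$ by taking the inner product of $L_{\un z,\epsilon}\vec g(\epsilon,\un z)$ with a fixed nonzero covector and dividing, which makes $\rho$ continuous and bounded away from $0$ for small $\epsilon$ (since $\rho(0,\un z)\ge\beta^{2}/2>0$ uniformly by the bounds $\beta\le\|\vec e_b\|\le\sqrt2$ and hypothesis on $\beta$). Then the residual equation for $h$, after dividing by $\epsilon$, reads schematically
\[
h(\epsilon,\un z)\;=\;\frac{\langle A_{z_0}\vec e_{z_1}+\epsilon h(\epsilon,\un z)A_{z_0}\vec f_{z_1}-\rho(\epsilon,\un z)\,h(\epsilon,\shift^{-1}\un z)\,\vec f_{z_0}\,,\,\vec u_{z_0}\rangle}{\langle M_{z_0}\vec f_{z_1}\,,\,\vec u_{z_0}\rangle}\;+\;(\text{corrections}),
\]
where $\vec u_{z_0}$ is chosen so the denominator is bounded below. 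One checks $M_{z_0}\vec f_{z_1}\ne 0$ (indeed $\vec f_b$ is not in the kernel of $M_b$ since $\det(p)\ne 0$ forces $\vec e_b,\vec f_b$ to be "transverse" to the kernel), so the denominator is uniformly bounded away from zero. The right-hand side is a contraction in $h$ once $\epsilon_0$ is small, because every term in which $h$ appears carries an explicit factor $\epsilon$ except the term $\rho(\epsilon,\un z)h(\epsilon,\shift^{-1}\un z)$, which I would move to the left and absorb into the operator — the operator $h\mapsto(\rho(\epsilon,\cdot)\,h\circ\shift^{-1})\cdot(\text{bounded factor})$ has norm controlled, and combined with the genuine $O(\epsilon)$ contraction one gets a unique continuous fixed point by Banach's theorem on $C(\Omega\times[0,\epsilon_0])$. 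Continuity of $h$, hence of $\rho$ and of $\vec g$, is automatic since the fixed point lies in the space of continuous functions.

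The three quantitative bounds then follow: $\|\vec g(\epsilon,\un z)-\vec e_{z_1}\|=\epsilon|h(\epsilon,\un z)|\,\|\vec f_{z_1}\|\le U\epsilon$ from boundedness of the fixed point $h$ and $\|\vec f_b\|\le\sqrt2$; the estimate on $\rho$ follows by writing $\rho(\epsilon,\un z)-\rho(0,\un z)$ as an $O(\epsilon)$ expression in $h$ and the matrix entries; and $U^{-1}\vec 1\le\vec g(\epsilon,\un z)\le U\vec 1$ follows because $\vec e_{z_1}$ already satisfies $\beta\vec 1\le \vec e_{z_1}\le\vec 1$ (entrywise, from the definition of $\vec e_b$ and $0<\beta\le p_i,1-p_i$) and the correction $\epsilon h\vec f_{z_1}$ is $O(\epsilon)$, so for $\epsilon_0$ small the perturbed vector stays in a slightly enlarged cone. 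The main obstacle I anticipate is purely bookkeeping: setting up the fixed-point map so that the dependence of $\rho$ on $h$ (at shifted arguments) does not destroy the contraction — this is handled by treating $(\vec g,\rho)$ jointly, or equivalently by first solving for $\rho$ as a continuous functional of $h$ via a normalization and then closing the loop on $h$ alone, all while keeping every constant uniform in $\un z$ thanks to the compactness of $\Omega$ and the uniform positivity $\beta>0$.
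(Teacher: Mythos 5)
Your overall strategy is the paper's: write $L_{\un{z},\epsilon}=M_{z_0}+\epsilon A_{z_0}$, use that $M_b$ has rank one with range $\R\,\vec{e}_b$ and $M_b^{t}\vec{f}_b=0$, plug in the ansatz $\vec{g}=\vec{e}_{z_1}+\epsilon h\,\vec{f}_{z_1}$, determine $\rho$ by projecting onto $\vec{e}$, determine $h$ by projecting onto the left null direction, and close with a Banach fixed point in the continuous functions on $[0,\epsilon_0]\times\Omega$, all constants uniform thanks to $\beta>0$; the three final estimates are then read off as you say. The gap is in the one step that carries the weight: the fixed-point equation and the contraction claim. First, your displayed equation is inconsistent with the projection you describe: if $\vec{u}_{z_0}$ is orthogonal to $\range(M_{z_0})=\R\,\vec{e}_{z_0}$ (i.e.\ proportional to $\vec{f}_{z_0}$), then $\langle M_{z_0}\vec{f}_{z_1},\vec{u}_{z_0}\rangle=0$ because $M_{z_0}\vec{f}_{z_1}$ lies in $\range(M_{z_0})$; so the denominator you rely on vanishes, and the observation ``$M_{z_0}\vec{f}_{z_1}\neq 0$'' is beside the point. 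If instead $\vec{u}_{z_0}$ is not orthogonal to $\vec{e}_{z_0}$, the terms $M_{z_0}\vec{e}_{z_1}$ and $\rho\,\vec{e}_{z_0}$ do not drop out, and after dividing by $\epsilon$ the unknown appears with $O(1)$ coefficients both at $\un{z}$ and at $\shift^{-1}\un{z}$. Second, and more seriously, your remedy for the term $\rho(\epsilon,\cdot)\,h(\epsilon,\shift^{-1}\cdot)$ --- ``move it to the left and absorb it into the operator, whose norm is controlled'' --- is unjustified precisely where the argument could fail: that coefficient carries no factor of $\epsilon$ (it is comparable to $\rho\,\|\vec{f}_{z_0}\|^{2}$ against a denominator of the same order), and there is no reason its sup norm is smaller than $1$ uniformly in $\un{z}$, so no contraction follows from what is written.

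The fix is the paper's arrangement. Project the eigenvalue relation onto $\vec{f}$ alone; this annihilates \emph{every} $M$-term (both $M\vec{e}$ and $\epsilon h\,M\vec{f}$), leaving $\epsilon\langle A\vec{e},\vec{f}\rangle+\epsilon^{2}h\,\langle A\vec{f},\vec{f}\rangle=\epsilon\,\rho\,h(\epsilon,\shift^{-1}\cdot)\,\|\vec{f}\|^{2}$. Now divide by $\epsilon$ \emph{and by} $\rho$, and solve for the \emph{shifted} value of $h$, substituting for $\rho$ the expression obtained from the $\vec{e}$-projection. In the paper's indexing this yields $h=\mathscr{T}(h)$ with $\mathscr{T}(h)(\epsilon,\un{z})=\bigl(u_1+\epsilon u_2\,h(\epsilon,\shift\un{z})\bigr)/\bigl(u_3+\epsilon u_4\,h(\epsilon,\shift\un{z})\bigr)$, where the denominator is bounded below by a quantity of order $\beta^{3}$ minus $O(\epsilon)$ corrections, uniformly in $\un{z}$. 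Then every occurrence of the unknown carries an explicit factor $\epsilon$, so $\mathscr{T}$ maps a ball of radius $D\sim\beta^{-5}$ into itself and is an $\epsilon$-Lipschitz contraction; Banach's theorem gives the continuous fixed point, and your derivation of the bounds on $\vec{g}$, on $\rho$, and of $U^{-1}\vec{1}\leq\vec{g}\leq U\vec{1}$ then goes through as you outlined.
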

\begin{pf}
The equation for $\vec g$ is equivalent to
%
\begin{equation}
\label{eqproj} L_{\shift\un{z},\varepsilon} \vec{g}(\varepsilon,\shift\un{z})
= \rho(\varepsilon,
\shift\un{z}) \vec{g}(\varepsilon,\un{z}).
\end{equation}
Note that
\[
\vec{g}(\varepsilon,\shift\un{z}) = \vec{e}_{z_2} + \varepsilon h(\varepsilon,
\shift\un{z}) \vec{f}_{z_2} \quad\mbox{and}\quad L_{\shift\un{z},\varepsilon}=M_{z_{1}}+
\varepsilon A_{z_{1}}.
\]
Taking the scalar product of both terms in equation (\ref{eqproj})
with $\vec{e}_{z_1}$ and $\vec{f}_{z_1}$ we get
%
\begin{eqnarray}\label{lerho}
\rho(\varepsilon,\shift\un{z}) &=& \langle M_{z_1}\vec{e}_{z_2},
\vec{e}_{z_1}\rangle+ \varepsilon h(\varepsilon,\shift\un{z})\langle
M_{z_1}\vec{f}_{z_2}, \vec{e}_{z_1}\rangle
\nonumber\\[-8pt]\\[-8pt]
&&{}+ \varepsilon\langle A_{z_1}\vec{e}_{z_2},
\vec{e}_{z_1}\rangle+ \varepsilon^2 h(\varepsilon,\shift\un{z})
\langle A_{z_1}\vec{f}_{z_2},\vec{e}_{z_1} \rangle\nonumber
\end{eqnarray}
and since $M_{z_{1}}^{t}\vec{f}_{z_{1}}=0$ and $\langle\vec
f_{z_{1}},\vec e_{z_{1}}\rangle=0$,
\[
\rho(\varepsilon,\shift\un{z}) h(\varepsilon,\un{z}) = \langle A_{z_1}
\vec{e}_{z_2},\vec{f}_{z_1}\rangle+\varepsilon h(\varepsilon,\shift
\un{z})\langle A_{z_1}\vec{f}_{z_2},\vec{f}_{z_1}
\rangle.
\]
We denote by $\mathcal{D}$ the Banach space of continuous functions on
$[0,\varepsilon_0]\times\Omega$ equipped with the sup norm. On the ball $B_{D}$
of radius $D=4\beta^{-1}$ centered at the origin in $\mathcal{D}$ we
define a
transformation $\mathscr{T}$ given by
%
\begin{equation}
\label{leT} \mathscr{T}(h) (\varepsilon,\un{z}) = \frac{u_{1}(\varepsilon,\un
{z})+\varepsilon u_{2}
(\varepsilon,\un{z}) h(\varepsilon,\shift\un{z})} {
u_{3}(\varepsilon,\un{z})+\varepsilon u_{4}
(\varepsilon,\un{z}) h(\varepsilon,\shift\un{z})},
\end{equation}
where
\begin{eqnarray*}
u_{1}(\varepsilon,\un{z}) &=& \langle A_{z_1}
\vec{e}_{z_2},\vec{f}_{z_1}\rangle,\qquad u_{2}(
\varepsilon,\un{z})=\langle A_{z_1}\vec{f}_{z_2},
\vec{f}_{z_1}\rangle, %
\\
u_{3}(\varepsilon,\un{z})&=&\langle M_{z_1}
\vec{e}_{z_2},\vec{e}_{z_1}\rangle+ \varepsilon\langle
A_{z_1}\vec{e}_{z_2},\vec{e}_{z_1}\rangle
\end{eqnarray*}
and
\[
u_{4}(\varepsilon,\un{z})= \langle M_{z_1}
\vec{f}_{z_2}, \vec{e}_{z_1}\rangle+ \varepsilon\langle
A_{z_1}\vec{f}_{z_2},\vec{e}_{z_1} \rangle.
\]
Direct computation shows that for all $(\varepsilon,\un{z})\in
[0,\varepsilon_0]\times\Omega$ we have
\begin{eqnarray*}
\beta\leq\bigl|u_{1}(\varepsilon,\un{z})\bigr| &\leq&1,\qquad \beta\leq
\bigl|u_{2}(\varepsilon,\un{z})\bigr| \leq1,\qquad \beta\leq\frac{u_{1}(\varepsilon,\un
{z})}{u_{3}(\varepsilon,\un{z})}\le
\beta^{-1}, %
\\
\beta- \varepsilon\leq\bigl|u_{3}(\varepsilon,\un{z})\bigr| &\leq&1+\varepsilon,\qquad \beta-
\varepsilon\leq\bigl|u_{4}(\varepsilon,\un{z})\bigr| \leq1+\varepsilon.
\end{eqnarray*}

We first prove that $\mathscr{T}$ maps $B_{D}$ into itself. Indeed for
$h\in B_{D}$, since $D=4\beta^{-1}$ there exists $\varepsilon'_{0}>0$ small
enough such that for any $\varepsilon\in[0,\varepsilon'_{0}]$,
\[
\bigl|\mathscr{T}(h) (\varepsilon,\un{z}) \bigr| \le\frac{1+\varepsilon D }{\beta
-\varepsilon-\varepsilon D(1+\varepsilon)} \leq D. %
\]
We leave to the reader the proof that $\mathscr{T}(h)$ is a continuous
function of $\varepsilon$ and $\un{z}$.
We now prove that $\mathscr{T}$ is a contraction on
$B_{D}$. For $h$ and $h'$ in $B_{D}$, since $D=4\beta^{-1}$ there exists
$\varepsilon_{0}>0$ small
enough, and smaller than $\varepsilon'_{0}$,
such that for any $\varepsilon\in[0,\varepsilon_{0}]$
we have
\begin{eqnarray*}
&&
\bigl|\mathscr{T}(h) (\varepsilon,\un{z}) -\mathscr{T} \bigl(h' \bigr) (
\varepsilon,\un{z}) \bigr|
\\
&&\qquad= \varepsilon\biggl| \frac{u_{1}(\varepsilon,\un{z})u_{4}(\varepsilon,\un{z})-
u_{2}(\varepsilon,\un{z})u_{3}(\varepsilon,\un{z})}{
(u_{3}(\varepsilon,\un{z})+\varepsilon u_{4}
(\varepsilon,\un{z}) h(\varepsilon,\shift\un{z}) )
(u_{3}(\varepsilon,\un{z})+\varepsilon u_{4}
(\varepsilon,\un{z}) h'(\varepsilon,\shift\un{z}) )
} \biggr| \\
&&\qquad\quad{}\times\bigl|h(\varepsilon,\un{z})-h'(\varepsilon,
\un{z}) \bigr|
\\
&&\qquad\leq\varepsilon\frac{4}{ (
\beta-\varepsilon-\varepsilon D(1+\varepsilon) )^{2}} \bigl|h(\varepsilon,\un
{z})-h'(\varepsilon,
\un{z}) \bigr| \leq\frac
{1}{2} \bigl|h(\varepsilon,\un{z})-h'(\varepsilon,
\un{z}) \bigr|.
\end{eqnarray*}
By the contraction mapping principle [see, e.g., \citet{dieudonne}],
the map $\mathscr{T}$ has a unique fixed point $h$ in $B_{D}$. It
follows at once that the vectors
\[
\vec{g}(\varepsilon,\un{z}) = \vec{e}_{z_1} + \varepsilon h(\varepsilon,\un{z})
\vec{f}_{z_1} %
\]
satisfy equation (\ref{eqproj}). The estimate on
$\vec{g}(\varepsilon,\un{z})$ follows immediately from the fact that
$h\in
B_{D}$, and from (\ref{leT}),
\[
h(\varepsilon,\un{z})= \frac{u_{1}(\varepsilon,\un{z})}{u_{3}(\varepsilon,\un
{z})}+\mathcal{O}(\varepsilon). %
\]
The estimate on $\rho(\varepsilon,\un{z})$
follows from (\ref{lerho}).
\end{pf}

\begin{remark*}
An easy improvement of the above proof allows to show that $\rho$ and
$h$ depend analytically on $\varepsilon$ in a small (complex)
neighborhood of
$0$.
\end{remark*}

By the estimate on $\vec{g}(\varepsilon,\un{z})$ of the previous lemma and
Lemma \ref{signos} applied to the vector $\vec1$, we have
$\mu$-almost surely
\[
\lim_{n\to\infty}\frac{1}{n}\log\bigl\| L_{\shift^{-1}\un{z}}^{[n-1]}
\vec{g}(\varepsilon,\un{z})\bigr\| = \lim_{n\to\infty}\frac{1}{n}\log\bigl\|
L_{\shift^{-1}\un{z}}^{[n-1]} \vec1\bigr\| = \lambda_{1}. %
\]
On the other hand from Lemma \ref{projectif} it follows that
\[
\log\bigl\| L_{\shift^{-1}\un{z}}^{[n-1]} \vec{g}(\varepsilon,\un{z})\bigr\| = \sum
_{j=0}^{n}\log\rho\bigl(\varepsilon,
\shift^{-j} \un{z} \bigr)+\bigl\| \vec g \bigl(\varepsilon,\shift^{-n}
\un{z} \bigr) \bigr\|. %
\]
Using again the estimate on $\vec{g}(\varepsilon,\un{z})$ from
Lemma \ref{projectif},
the Birkhoff ergodic theorem [\citet{Krengel}] and the
ergodicity of $\mu$, we have
\[
\lambda_{1}=\int\log\rho(\varepsilon,\un{z}) \,d \mu(\un{z}). %
\]
The first Lyapunov exponent $\lambda_{1}$ is equal to
$H$ the entropy of the process $(Z_t)_{t\in\Z}$
and this entropy has an expansion in terms of $\varepsilon$; see
\citet{JSS}.
Therefore
\[
H=H_{0}+\mathcal{O}(\varepsilon),
\]
where $H_{0}$ is the entropy of the Markov chain
$(X_t)_{t\in\Z}$.\vadjust{\goodbreak}

The following theorem is an immediate consequence of the above estimates.
%
\begin{theorem}
If $p_{0}\neq p_{1}$, $\min\{p_{0}, p_{1} 1-p_{0},1-p_{1}\}>0$
and $\varepsilon>0$ is small enough, we have $\mu$-almost surely
\[
\limsup_{n\to+\infty}\frac1n \log\bigl\llvert\nu^{[n]}_{\un{z}, b}(a)-
\nu^{[n]}_{\un{z}, c}(a) \bigr\rrvert\le\log\varepsilon+\log\bigl|\det(P)\bigr|
-2 H_{0}+\mathcal{O}(\varepsilon). %
\]
Moreover, for $\mu$-almost all $\un{z}$
there is a triplet $(a,b,c)$ (which may depend on~$\un{z}$)
where the equality holds.
\end{theorem}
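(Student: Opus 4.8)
The plan is to combine the general upper and lower bounds of Section~\ref{main} with the two explicit computations of $\lambda_{1}+\lambda_{2}$ and of $\lambda_{1}$ carried out just above the statement. First I would record that for the standing assumptions ($p_{0}\neq p_{1}$, $\beta>0$, $\epsilon$ small, in particular $\epsilon\in(0,1)\setminus\{1/2\}$) both hypotheses (H1)--(H2) hold in the present setting: (H1) holds because $\beta>0$ forces $\min_{i,j}p(j|i)>0$ and the entries of $Q$ are $\epsilon$ and $1-\epsilon$, both strictly positive, so $\min_{i,m}q(m|i)>0$; (H2) holds because $\det(P)=p_{0}-p_{1}\neq0$ and $\det(Q)=1-2\epsilon\neq0$, so $\mathrm{rank}(q)=2=k$. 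Consequently Theorem~\ref{upper_bound} applies and gives $\tau_{\un{z}}(a,b,c)\le\lambda_{2}-\lambda_{1}$ $\mu$-almost surely for every fixed triplet $(a,b,c)$, while Theorem~\ref{lower_bound} applies and gives, for $\mu$-almost every $\un{z}$, a triplet $(a,b,c)$ (depending on $\un{z}$) with $\tau_{\un{z}}(a,b,c)=\lambda_{2}-\lambda_{1}$.

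It then remains only to identify $\lambda_{2}-\lambda_{1}$ with the claimed expression. Writing $\lambda_{2}-\lambda_{1}=(\lambda_{1}+\lambda_{2})-2\lambda_{1}$, I would insert the identity \eqref{ledet}, namely $\lambda_{1}+\lambda_{2}=\log\epsilon+\log(1-\epsilon)+\log|\det(P)|$, together with the estimate $\lambda_{1}=\pi(0)H(p_{0})+\pi(1)H(p_{1})+\mathcal{O}(\epsilon)$ established above by means of Lemma~\ref{projectif} and the Birkhoff ergodic theorem. Since $\log(1-\epsilon)=-\epsilon+\mathcal{O}(\epsilon^{2})=\mathcal{O}(\epsilon)$, this term is absorbed into the error, and one gets
\[
\lambda_{2}-\lambda_{1}\;=\;\log\epsilon+\log|\det(P)|-2\bigl[\pi(0)H(p_{0})+\pi(1)H(p_{1})\bigr]+\mathcal{O}(\epsilon)\,.
\]
Feeding this into the two bounds of the previous paragraph yields both assertions of the theorem.

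The argument is essentially bookkeeping, so there is no real obstacle; the only points needing (mild) care are verifying that (H1)--(H2) genuinely hold for small $\epsilon$ — the condition $\epsilon\neq1/2$ being exactly what is built into the standing hypothesis $\epsilon\in(0,1)\setminus\{1/2\}$ — and observing that the $\mathcal{O}(\epsilon)$ in the formula for $\lambda_{1}$ is a deterministic constant (a Lyapunov exponent does not depend on $\un{z}$), so that the error term in the final estimate is uniform and the ``$\mu$-almost surely'' in the statement refers only to the exceptional null set coming from Oseledec's theorem and Theorems~\ref{upper_bound}--\ref{lower_bound}.
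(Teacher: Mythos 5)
Your proposal is correct and follows essentially the same route as the paper: verify (H1)--(H2) for small $\epsilon$, apply Theorems~\ref{upper_bound} and~\ref{lower_bound}, and then express $\lambda_{2}-\lambda_{1}=(\lambda_{1}+\lambda_{2})-2\lambda_{1}$ using the identity \eqref{ledet} and the estimate $\lambda_{1}=\pi(0)H(p_{0})+\pi(1)H(p_{1})+\mathcal{O}(\epsilon)$, absorbing $\log(1-\epsilon)$ into the error term. The paper's own proof is just a terser statement of exactly this bookkeeping, so nothing further is needed.
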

%
%
\begin{pf}
It is easy to verify that hypotheses (H1)--(H2) are satisfied.
We
therefore apply Theorems \ref{upperbound} and \ref{lowerbound}.
The
result follows from (\ref{ledet})
and the above estimate on $\lambda_{1}$.
\end{pf}

As $\lambda_1$ and $\lambda_2$ are fixed, the above estimate also
applies to the asymptotic rate of exponential
loss of memory of the measures
$\sigma^{[n]}_{\un{z}, \eta}$, $\tilde\nu^{[n]}_{\un{z},
\rho}$ and
$\tilde\sigma^{[n]}_{\un{z}, \eta}$.

\section*{Acknowledgments}

The authors thank A. Galves for interesting discussions about the
subject, and an anonymous referee for useful suggestions. This work is
part of FAPESP's project \emph{NeuroMat} (2011/51350-6) and CNPq's
project \emph{Probabilistic modeling of brain activity}
(480108/2012-9). F. Leonardi is also thankful to the Brazilian--French agreement
for cooperation in mathematics and USP-COFECUB project
(2009.1.820.45.8).



\printaddresses


\begin{thebibliography}{14}

\bibitem[\protect\citeauthoryear{Bowen}{2008}]{bowen}
\begin{bbook}[mr]
\bauthor{\bsnm{Bowen},~\bfnm{Rufus}\binits{R.}}
(\byear{2008}).
\btitle{Equilibrium States and the Ergodic Theory of {A}nosov Diffeomorphisms},
\bedition{revised} ed.
\bseries{Lecture Notes in Math.}
\bvolume{470}.
\bpublisher{Springer}, \blocation{Berlin}.
\bid{mr={2423393}}
\bptok{imsref}%
\end{bbook}
\endbibitem

\bibitem[\protect\citeauthoryear{Capp{\'e}, Moulines and
  Ryd{\'e}n}{2005}]{cappe2005}
\begin{bbook}[mr]
\bauthor{\bsnm{Capp{\'e}},~\bfnm{Olivier}\binits{O.}},
  \bauthor{\bsnm{Moulines},~\bfnm{Eric}\binits{E.}} \AND
  \bauthor{\bsnm{Ryd{\'e}n},~\bfnm{Tobias}\binits{T.}}
(\byear{2005}).
\btitle{Inference in Hidden {M}arkov Models}.
\bpublisher{Springer}, \blocation{New York}.
\bid{mr={2159833}}
\bptok{imsref}%
\end{bbook}
\endbibitem

\bibitem[\protect\citeauthoryear{Cavazos-Cadena}{2003}]{cavazos}
\begin{barticle}[mr]
\bauthor{\bsnm{Cavazos-Cadena},~\bfnm{Rolando}\binits{R.}}
(\byear{2003}).
\btitle{An alternative derivation of {B}irkhoff's formula for the contraction
  coefficient of a positive matrix}.
\bjournal{Linear Algebra Appl.}
\bvolume{375}
\bpages{291--297}.
\bid{doi={10.1016/j.laa.2003.05.002}, issn={0024-3795}, mr={2013472}}
\bptok{imsref}%
\end{barticle}
\endbibitem

\bibitem[\protect\citeauthoryear{Chazottes and Ugalde}{2011}]{chazottes2}
\begin{bincollection}[mr]
\bauthor{\bsnm{Chazottes},~\bfnm{Jean-Ren{\'e}}\binits{J.-R.}} \AND
  \bauthor{\bsnm{Ugalde},~\bfnm{Edgardo}\binits{E.}}
(\byear{2011}).
\btitle{On the preservation of {G}ibbsianness under symbol amalgamation}.
In \bbooktitle{Entropy of Hidden {M}arkov Processes and Connections to
  Dynamical Systems}.
\bseries{London Mathematical Society Lecture Note Series}
\bvolume{385}
\bpages{72--97}.
\bpublisher{Cambridge Univ. Press}, \blocation{Cambridge}.
\bid{mr={2866665}}
\bptok{imsref}%
\end{bincollection}
\endbibitem

\bibitem[\protect\citeauthoryear{Collet, Galves and Leonardi}{2008}]{collet}
\begin{barticle}[mr]
\bauthor{\bsnm{Collet},~\bfnm{Pierre}\binits{P.}},
  \bauthor{\bsnm{Galves},~\bfnm{Antonio}\binits{A.}} \AND
  \bauthor{\bsnm{Leonardi},~\bfnm{Florencia}\binits{F.}}
(\byear{2008}).
\btitle{Random perturbations of stochastic processes with unbounded variable
  length memory}.
\bjournal{Electron. J. Probab.}
\bvolume{13}
\bpages{1345--1361}.
\bid{doi={10.1214/EJP.v13-538}, issn={1083-6489}, mr={2438809}}
\bptok{imsref}%
\end{barticle}
\endbibitem

\bibitem[\protect\citeauthoryear{Dieudonn{\'e}}{1969}]{dieudonne}
\begin{bbook}[mr]
\bauthor{\bsnm{Dieudonn{\'e}},~\bfnm{J.}\binits{J.}}
(\byear{1969}).
\btitle{Foundations of Modern Analysis}.
\bseries{Pure and Applied Mathematics}
\bvolume{10-I}.
\bpublisher{Academic Press}, \blocation{New York}.
\bnote{Enlarged and corrected printing}.
\bid{mr={0349288}}
\bptok{imsref}%
\end{bbook}
\endbibitem

\bibitem[\protect\citeauthoryear{Douc, Moulines and Ritov}{2009}]{moulines2}
\begin{barticle}[mr]
\bauthor{\bsnm{Douc},~\bfnm{Randal}\binits{R.}},
  \bauthor{\bsnm{Moulines},~\bfnm{Eric}\binits{E.}} \AND
  \bauthor{\bsnm{Ritov},~\bfnm{Ya'acov}\binits{Y.}}
(\byear{2009}).
\btitle{Forgetting of the initial condition for the filter in general
  state-space hidden {M}arkov chain: A coupling approach}.
\bjournal{Electron. J. Probab.}
\bvolume{14}
\bpages{27--49}.
\bid{doi={10.1214/EJP.v14-593}, issn={1083-6489}, mr={2471658}}
\bptok{imsref}%
\end{barticle}
\endbibitem

\bibitem[\protect\citeauthoryear{Douc et~al.}{2009}]{moulines1}
\begin{barticle}[mr]
\bauthor{\bsnm{Douc},~\bfnm{R.}\binits{R.}},
  \bauthor{\bsnm{Fort},~\bfnm{G.}\binits{G.}},
  \bauthor{\bsnm{Moulines},~\bfnm{E.}\binits{E.}} \AND
  \bauthor{\bsnm{Priouret},~\bfnm{P.}\binits{P.}}
(\byear{2009}).
\btitle{Forgetting the initial distribution for hidden {M}arkov models}.
\bjournal{Stochastic Process. Appl.}
\bvolume{119}
\bpages{1235--1256}.
\bid{doi={10.1016/j.spa.2008.05.007}, issn={0304-4149}, mr={2508572}}
\bptok{imsref}%
\end{barticle}
\endbibitem

\bibitem[\protect\citeauthoryear{Jacquet, Seroussi and Szpankowski}{2008}]{JSS}
\begin{barticle}[mr]
\bauthor{\bsnm{Jacquet},~\bfnm{Philippe}\binits{P.}},
  \bauthor{\bsnm{Seroussi},~\bfnm{Gadiel}\binits{G.}} \AND
  \bauthor{\bsnm{Szpankowski},~\bfnm{Wojciech}\binits{W.}}
(\byear{2008}).
\btitle{On the entropy of a hidden {M}arkov process}.
\bjournal{Theoret. Comput. Sci.}
\bvolume{395}
\bpages{203--219}.
\bid{doi={10.1016/j.tcs.2008.01.012}, issn={0304-3975}, mr={2424508}}
\bptok{imsref}%
\end{barticle}
\endbibitem

\bibitem[\protect\citeauthoryear{Katok and Hasselblatt}{1995}]{katok1995}
\begin{bbook}[mr]
\bauthor{\bsnm{Katok},~\bfnm{Anatole}\binits{A.}} \AND
  \bauthor{\bsnm{Hasselblatt},~\bfnm{Boris}\binits{B.}}
(\byear{1995}).
\btitle{Introduction to the Modern Theory of Dynamical Systems}.
\bseries{Encyclopedia of Mathematics and Its Applications}
\bvolume{54}.
\bpublisher{Cambridge Univ. Press}, \blocation{Cambridge}.
\bid{mr={1326374}}
\bptok{imsref}%
\end{bbook}
\endbibitem

\bibitem[\protect\citeauthoryear{Krengel}{1985}]{Krengel}
\begin{bbook}[mr]
\bauthor{\bsnm{Krengel},~\bfnm{Ulrich}\binits{U.}}
(\byear{1985}).
\btitle{Ergodic Theorems}.
\bseries{de Gruyter Studies in Mathematics}
\bvolume{6}.
\bpublisher{de Gruyter}, \blocation{Berlin}.
\bid{doi={10.1515/9783110844641}, mr={0797411}}
\bptok{imsref}%
\end{bbook}
\endbibitem

\bibitem[\protect\citeauthoryear{Ledrappier}{1984}]{ledrappier}
\begin{bincollection}[mr]
\bauthor{\bsnm{Ledrappier},~\bfnm{F.}\binits{F.}}
(\byear{1984}).
\btitle{Quelques propri\'et\'es des exposants caract\'eristiques}.
In \bbooktitle{\'{E}cole D'\'et\'e de Probabilit\'es de {S}aint-{F}lour,
  {XII}---1982}.
\bseries{Lecture Notes in Math.}
\bvolume{1097}
\bpages{305--396}.
\bpublisher{Springer}, \blocation{Berlin}.
\bid{doi={10.1007/BFb0099434}, mr={0876081}}
\bptok{imsref}%
\end{bincollection}
\endbibitem

\bibitem[\protect\citeauthoryear{Petrie}{1969}]{petrie1969}
\begin{barticle}[mr]
\bauthor{\bsnm{Petrie},~\bfnm{T.}\binits{T.}}
(\byear{1969}).
\btitle{Probabilistic functions of finite state {M}arkov chains}.
\bjournal{Ann. Math. Statist.}
\bvolume{40}
\bpages{97--115}.
\bid{issn={0003-4851}, mr={0239662}}
\bptok{imsref}%
\end{barticle}
\endbibitem

\end{thebibliography}
\end{document}